\newtheorem{theorem}{Theorem}[section]
\newtheorem{lemma}[theorem]{Lemma}
\newtheorem{proposition}[theorem]{Proposition}
\newtheorem{corollary}[theorem]{Corollary}
\newtheorem{theoremA}{Theorem} 
\newtheorem{corollaryA}{Corollary} 
\newtheorem{corollaryB}{Corollary} 
\newtheorem{theoremB}{Theorem}
\newtheorem{corollaryC}{Corollary} 
\newtheorem{corollaryD}{Corollary}
\theoremstyle{definition}
\newtheorem{definition}[theorem]{Definition}
\theoremstyle{remark}
\newtheorem{remark}[theorem]{Remark}
\numberwithin{equation}{section}
\newcommand{\R}{\mathbb{R}}
\newcommand{\C}{\mathbb{C}}
\newcommand{\N}{\mathbb{N}}
\newcommand{\K}{\mathbb{K}}
\newcommand{\Lin}{\mathcal{L}}
\DeclareMathOperator{\dist}{dist\,}
\DeclareMathOperator{\id}{Id}
\DeclareMathOperator{\Orb}{Orb}
\DeclareMathOperator{\Span}{span}
\newcommand{\nn}[1]{{\left\vert\kern-0.25ex\left\vert\kern-0.25ex\left\vert #1 
		\right\vert\kern-0.25ex\right\vert\kern-0.25ex\right\vert}}
\renewcommand{\geq}{\geqslant}
\renewcommand{\leq}{\leqslant}
\newcommand{\NA}{\operatorname{NA}}
\newcommand{\eps}{\varepsilon}
\newcommand{\vertiii}[1]{{\left\vert\kern-0.25ex\left\vert\kern-0.25ex\left\vert #1 
    \right\vert\kern-0.25ex\right\vert\kern-0.25ex\right\vert}}
\newcounter{smallromans}
\renewcommand{\tocsection}[3]{%
	\indentlabel{\@ifnotempty{#2}{\bfseries\ignorespaces#1 #2\quad}}\bfseries#3}
\renewcommand{\tocsubsection}[3]{%
	\indentlabel{\@ifnotempty{#2}{\ignorespaces#1 #2\quad}}#3}
\newcommand\@dotsep{4.5}
\def\@tocline#1#2#3#4#5#6#7{\relax
	\ifnum #1>\c@tocdepth 
	\else
	\par \addpenalty\@secpenalty\addvspace{#2}%
	\begingroup \hyphenpenalty\@M
	\@ifempty{#4}{%
		\@tempdima\csname r@tocindent\number#1\endcsname\relax
	}{%
		\@tempdima#4\relax
	}%
	\parindent\z@ \leftskip#3\relax \advance\leftskip\@tempdima\relax
	\rightskip\@pnumwidth plus1em \parfillskip-\@pnumwidth
	#5\leavevmode\hskip-\@tempdima{#6}\nobreak
	\leaders\hbox{$\m@th\mkern \@dotsep mu\hbox{.}\mkern \@dotsep mu$}\hfill
	\nobreak
	\hbox to\@pnumwidth{\@tocpagenum{\ifnum#1=1\bfseries\fi#7}}\par
	\nobreak
	\endgroup
	\fi}
\renewcommand\csname r@tocindent0\endcsname{0pt}
\def\l@subsection{\@tocline{2}{0pt}{2.5pc}{5pc}{}}
\begin{document}
	
	\title[Linear structures in the set of non-norm-attaining operators]{Linear structures in the set of non-norm-attaining operators on Banach spaces}
	
\author[S.~Dantas]{Sheldon Dantas}
\address[S.~Dantas]{Czech Technical University in Prague, FEE, Department of Mathematics, Technická 2, 16627, Prague 6, Czech Republic \newline
\href{https://orcid.org/0000-0001-8117-3760}{ORCID: \texttt{0000-0001-8117-3760}}}
\email{\texttt{sheldon.dantas@fel.cvut.cz}}
\urladdr{www.sheldondantas.com}

	\author[Falcó]{Javier Falcó}
	\address[Falcó]{Departamento de Análisis Matemático, Facultad de Ciencias Matemáticas, Universidad de Valencia, Doctor Moliner 50, 46100 Burjasot (Valencia), Spain
		\newline
		\href{http://orcid.org/0000-0000-0000-0000}{ORCID: \texttt{0000-0003-2240-2855} }}
	\email{\texttt{francisco.j.falco@uv.es}}
	
	\author[Jung]{Mingu Jung}
        \address[Jung]{Department of Mathematics \& Research Institute for Natural Sciences, Hanyang University, 04763 Seoul, Republic of Korea \newline
\href{https://orcid.org/0000-0003-2240-2855}{ORCID: \texttt{0000-0003-2240-2855}}}

	\email{\texttt{mingujung@hanyang.ac.kr}}

	\author[Rodríguez-Vidanes]{Daniel L. Rodríguez-Vidanes}
	\address[Rodríguez-Vidanes]{Grupo de Investigación de Análisis Matemático y Aplicaciones (AMA), Departamento de Matemática Aplicada a la Ingeniería Industrial, E.T.S.I.D.I., Ronda de Valencia 3, Universidad Politécnica de Madrid,
		Madrid, 28012, Spain \newline
		\href{https://orcid.org/0000-0002-1016-096X}{ORCID: \texttt{0000-0002-1016-096X} }}
	\email{\texttt{dl.rodriguez.vidanes@upm.es}}

	\begin{abstract} We study large linear structures inside sets arising in the theory of norm-attaining operators. We provide several results in the context of lineability, spaceability, maximal-spaceability, and $(\alpha, \beta)$-spaceability for sets of non-norm-attaining bounded linear operators whenever such sets are nonempty. To be more specific, we show that if $Y$ is a strictly convex renorming of $c_0 (\Gamma)$, then the set 
    \[
    \mathcal{L}(c_0 (\Gamma),Y)\setminus \overline{\NA (c_0 (\Gamma), Y)}
    \] 
    is $2^{|\Gamma|}$-spaceable. We also prove that 
    \[
    \mathcal{L}(d_* (w,1) ,\ell_p )\setminus \overline{\NA (d_* (w,1) ,\ell_p )}
    \]
    is maximal-spaceable. Finally, we establish that whenever the set of non-norm-attaining operators from a Banach space $X$ into $\ell_p (\Gamma)$ (respectively, $c_0 (\Gamma)$) is nonempty, it contains a subspace linearly isometric to $\ell_p(\Gamma)$ (respectively, $c_0 (\Gamma)$). These results extend and complement several known results in the literature concerning large linear structures in sets of non-norm-attaining operators. Our results are obtained in a more general framework involving group-invariant operators, which allows us to treat classical spaces of operators as special cases.	\end{abstract}

	\thanks{ }
	
	\subjclass[2020]{Primary 46B87; Secondary 15A03, 46B04, 47B01, 46B25.}
	\keywords{Lineability; Spaceability; Norm-attaining operators; Bishop-Phelps theorem}
	
	\maketitle
	
	\thispagestyle{plain}

	\section{Introduction}
The presence of large linear structures inside highly non-linear sets has become a central topic in modern analysis. This phenomenon, commonly referred to as \emph{lineability}, seeks to determine whether sets lacking any apparent algebraic structure may nevertheless contain infinite-dimensional linear subspaces. Since the term lineability was coined by Gurariy in the early 2000s \cite{GQ}, the study of lineability has been explored in a wide variety of contexts, revealing that many pathological families of functions and operators exhibit an unexpectedly rich linear geometry. Early examples of this idea can be found in a theorem of Levine and Milman \cite{LM}, which shows that the set of functions of bounded variation on $[0,1]$ does not contain closed infinite-dimensional subspaces of $C[0,1]$ endowed with the supremum norm.

Motivated by the highly non-linear nature of the subset of norm-attaining operators, it is natural to ask whether its complement may contain large closed infinite-dimensional linear structures. In particular, one may consider the classes of operators that do not attain their norms or that cannot be approximated by norm-attaining operators. The main purpose of this paper is to investigate the existence of such large closed subspaces in these sets.

Given Banach spaces $X$ and $Y$, we denote by $\mathcal L(X,Y)$ the Banach space of all bounded linear operators from $X$ to $Y$, and $\NA(X,Y)$ the set of all operators in $\mathcal{L}(X,Y)$ that attain their norms. In general, $\NA(X,Y)$ fails to be a linear subspace of $\mathcal L (X,Y)$. For instance, the set $\NA(\ell_1, \K)$ coincides with 
    \[
    \{x \in (\ell_1)^*=\ell_{\infty}: \|x\|_{\infty} = \max_{n \in \N} |x_n| \}
    \]
    which is not a subspace of $\ell_\infty$, although it contains $c_0$. More strikingly, Rmoutil proved that the set $\NA(X, \mathbb{R})$ may contain no $2$-dimensional subspaces \cite{Rm}, thereby resolving an open problem posed by Bandyopadhyay and Godefroy in \cite{BG}. Furthermore, Martín generalized Rmoutil's result in \cite{M2} by showing that, for any $n\in \mathbb N$, there is a Banach space $X_n$ such that $\NA(X_n,\mathbb R)$ contains vector subspaces of dimension $n$ but does not contain vector subspaces of dimension $n+1$. More recently, Martín showed in \cite{Martin2025} that $\NA(X,\mathbb R)$ may fail to contain any non-trivial cone.

   These phenomena are strongly influenced by the geometry of the underlying norm. In fact, García-Pacheco and Puglisi proved in \cite{GP} that every real infinite-dimensional Banach space $X$ admits an equivalent norm $\vertiii{\cdot}$ such that $\NA((X, \vertiii{\cdot}),\mathbb R)$ contains vector subspaces of infinite-dimension, illustrating that the linear structure of $\NA(X,\mathbb R)$ can be considerably unstable under renorming.
   Moreover, such structural properties also have geometric consequences. For instance, if $X$ is a smooth Banach space, then closed subspaces in $\NA(X, \R)$ must be rotund \cite[Theorem 3.4]{AAAG2007}.

      The linear structure of $\NA(X,\mathbb R)$ is also closely related to the dual geometry of Banach spaces. A classical theorem of Petun\={\i}n and Pl\={\i}\v{c}ko \cite{PP} shows that if $X$ is separable and $W\subseteq \NA(X,\mathbb R)$ is a closed separating subspace, then $W$ is an isometric predual of $X$. This connection highlights that lineability phenomena in sets of norm-attaining functionals may reflect deep structural properties of the space, a perspective further developed in a subsequent work of Bandyopadhyay and Godefroy \cite{BG}. They also provided renorming techniques ensuring that $\NA(X,\mathbb R)\cup \{0\}$ contains infinite-dimensional closed subspaces when $B_{X^*}$ is $w^*$-sequentially compact \cite[Theorem 2.12]{BG}.

    On the other hand, the set $X^* \setminus \NA(X,\mathbb{R})$ of all non-norm-attaining functionals on $X$ together with $\{0\}$ may contain large closed linear structures. In particular, under suitable assumptions on compact Hausdorff spaces $K$ and measures $\mu$, both $[{C}(K)^* \setminus \NA({C}(K), \R)]\cup\{0\}$ and $[L_1(\mu)^* \setminus \NA(L_1(\mu), \R)]\cup\{0\}$ contain infinite-dimensional \textit{closed} linear subspaces  \cite[Theorems 2.5 and 2.7]{AAAG2007}.

	In the vector-valued setting, Pellegrino and Teixeira \cite{PT} investigated lineability properties related to the set $\NA(X, \ell_p)$ and its complement $\mathcal{L}(X, \ell_p) \setminus \NA(X, \ell_p)$. They showed that, whenever $1 \leq p < \infty$ and $Y$ contains an isometric copy of $\ell_p$, the set 
    \[
    \NA^{x_0}(X, Y) := \{ T \in \NA(X,Y): \|T\|=\|Tx_0\| \}
    \]
    together with $\{0\}$ contains an infinite-dimensional subspace \cite[Proposition 6]{PT}. Under the same assumptions, the same holds for the set $[\mathcal{L}(X, Y) \setminus \NA(X,Y)] \cup \{0\}$ whenever it is non-empty \cite[Proposition 7]{PT} (we also refer the interested reader to \cite{DJM} for additional conditions ensuring the existence of non-norm-attaining operators). In the same spirit, it was shown in \cite[Theorem 2.3]{FGMR} that there are Banach spaces $X$ and $Y$ such that the subset of all compact operators which cannot be approximated by norm-attaining operators together with the null operator also contains infinite-dimensional closed subspaces.

In this paper we investigate lineability and spaceability phenomena for the class of non-norm-attaining operators. Our results provide new examples and extend several known results in the literature on norm-attainment theory, placing them within a broader framework extending \cite{AFM, DFJ, F, FGJM}. More precisely, we formulate our results in terms of group-invariant operators, which arise naturally in a setting that extends the classical theory of bounded linear operators.

In what follows, we summarize our main results and provide some comments on them. For the terminology $\mu$-spaceable, where $\mu$ is a cardinal, we refer the reader to Definition~\ref{lineability-properties} below. The symbol $d_{*}(w,1)$ stands for the predual of a Lorentz sequence space (see Subsection \ref{basic-notation}). 

    \smallskip
    
	\noindent
	{\bf Main Results}: Let $X$ and $Y$ be Banach spaces, and let $\Gamma$ be any infinite set. 
	\begin{enumerate}[label=\textbf{(M\arabic*)}]
 \itemsep0.3em
		\item \label{M1} If $Y$ is a strictly convex renorming of $c_0(\Gamma)$, then
		\begin{equation*}
			\mathcal{L}(c_0(\Gamma), Y) \setminus \overline{\NA(c_0(\Gamma), Y)}
		\end{equation*}
		is $2^{|\Gamma|}$-spaceable in $\mathcal{L}(c_0(\Gamma), Y)$. In particular, if $\Gamma = \mathbb N$, it is maximal-spaceable (specifically, it is $\mathfrak{c}$-spaceable) in $\mathcal{L}(c_0, Y)$. 
		
		\item \label{M2} Suppose that $w = (1/n)_{n=1}^{\infty} \in c_0$ and $1 < p < \infty$. Then,
		\begin{equation*}
			\mathcal{L}(d_{*}(w,1), \ell_p) \setminus \overline{\NA(d_{*}(w,1), \ell_p)}
		\end{equation*}
		is maximal-spaceable (specifically, it is $\mathfrak c$-spaceable) in $\mathcal{L}(d_{*}(w,1), \ell_p)$.

		\item \label{M3} 
        Let $\Gamma$ be an infinite set and let $1 \leq p < \infty$.
        \begin{enumerate}
            \item  If $\mathcal{L}(X, \ell_p (\Gamma)) \setminus \NA(X, \ell_p (\Gamma))$ is nonempty, then the subset $\mathcal{L}(X, \ell_p (\Gamma)) \setminus \NA(X, \ell_p (\Gamma))$ contains an isometric copy of $\ell_p (\Gamma)$. 
            \item If $\mathcal{L}(X, c_0(\Gamma)) \setminus \NA(X, c_0 (\Gamma))$ is nonempty, then the subset $\mathcal{L}(X, c_0(\Gamma)) \setminus \NA(X, c_0 (\Gamma))$ contains an isometric copy of $c_0(\Gamma)$.
            
        \end{enumerate}
	\end{enumerate}

The result \ref{M1} is obtained as a particular case of Theorem~\ref{thmA}, while \ref{M2} is a consequence of Theorem~\ref{thmB}. The statement in \ref{M3} follows from Theorems~\ref{thm:lqgamma} and \ref{thm:c_0-target}. We briefly discuss some relevant observations concerning the results \ref{M1}--\ref{M3}. First, the result \ref{M1} should be compared to the classical result due to J. Lindenstrauss in \cite{L}, where he gave a negative answer to the Bishop and Phelps question \cite{BP} on whether every bounded linear operator can be approximated by norm-attaining operators. Our result \ref{M1} yields an abundant supply of such operators, in the precise sense that they form an infinite-dimensional subspace with this property. Lindenstrauss’s work initiated extensive research on Bishop–Phelps type theorems for operators. 

A central notion that Lindenstrauss introduced was the so-called property B: a Banach space $Y$ satisfies {\it Lindenstrauss property B} if $\NA(Z,Y)$ is dense in $\mathcal{L}(Z, Y)$ for every Banach space $Z$. Among the notable negative results is the theorem of Gowers \cite{G}, which shows that the classical spaces $\ell_p$, for $1<p<\infty$, fail to satisfy property~B. The result \ref{M2} above complements this phenomenon by providing infinite-dimensional linear structures entirely formed by bounded linear operators taking values in $\ell_p$-spaces that cannot be approximated by norm-attaining operators. 

Finally, the result for $\ell_p(\Gamma)$ in item \ref{M3} extends a result of  Pellegrino and Teixeira \cite[Proposition 7]{PT}, while the corresponding result for $c_0(\Gamma)$ provides, to the best of our knowledge, the first example of this phenomenon when the range space is a $c_0(\Gamma)$-space.

Let us emphasize once again that the results \ref{M1}, \ref{M2} and \ref{M3} will be formulated in terms of $G$-invariant operators, where $G$ is a group acting continuously on the Banach space $X$ by linear isometries (see Subsections \ref{section:groups} and \ref{subsection:orbits}). In particular, by taking $G$ to be the trivial group $\{\id\}$, where $\id$ denotes the identity mapping on $X$, one recovers the statements presented in this introduction as a special case. We believe that this general framework both unifies and extends these phenomena, providing a broader perspective.

	\section{Preliminary Material}\label{sec:pre}

	In this second section, we introduce the notation and basic results needed so that the reader can follow the paper without having to consult multiple references. More specifically, we present the definitions related to lineability as well as the necessary concepts from set theory and the theory of invariant operators on Banach spaces under the action of a compact group $G$.

	\subsection{Basic Notation} \label{basic-notation} Throughout the paper, we use standard notation from Banach space theory (see, for instance, \cite{FHHMZ}). We denote by $X^*$, $B_X$, and $S_X$ the dual space, the unit ball, and the unit sphere of a Banach space $X$ over a field $\K$, either $\R$ or $\C$, respectively. The symbol $\mathcal{L}(X,Y)$ stands for the Banach space of all bounded linear operators from $X$ to $Y$. To simplify the notation, we write $\mathcal{L}(X)$ instead of $\mathcal{L}(X,X)$ when $X=Y$. We say that an operator $T \in \mathcal{L}(X,Y)$ {\it attains its norm}, or it is {\it norm-attaining} if there exists $x_0 \in S_X$ such that $\|T\| = \|T(x_0)\|$. We denote by $\NA(X, Y)$ the subset of $\mathcal{L}(X, Y)$ consisting of all bounded linear operators that attain their norms. The topological aspects of $\NA(X,Y)$ have been studied extensively in the literature; see, for example, the classical references \cite{B, JW, L, S, uhl}. As mentioned in the introduction and as simple examples show, $\NA(X,Y)$ is not, in general, a linear subspace of $\mathcal{L}(X,Y)$.
	
	As we will work with the predual of the Lorentz sequence spaces $d_{*}(w, 1)$ in Section \ref{main}, we briefly recall its definition. The predual  $d_*(w,1)$ is defined as 
	\[
	d_{*}(w,1)=\left\{x=(x_i)_{i=1}^{\infty} \in c_0: \lim_{k\to\infty}\frac{\sum_{i=1}^k[x_i]}{\sum_{i=1}^k w_i} =0 \right\}
	\] 
	where $([x_i])_{i=1}^{\infty}$ is the non-increasing rearrangement of $(|x|_i)_{i=1}^{\infty}$ endowed with the norm 
	\[
	\Vert x\Vert=\sup_k \frac{\sum_{i=1}^k[x_i]}{\sum_{i=1}^k w_i}.
\]
We refer the interested reader to \cite{Gar, Wer} for further details.

	\subsection{Lineability} Let us now provide the necessary background on lineability used throughout this paper. We introduce five related concepts in the following definition. We refer the reader to the monograph \cite{ABPS}. In what follows, $\dim(V)$ denotes the algebraic dimension of a topological vector space $V$ over $\mathbb K$.

	\begin{definition} \label{lineability-properties} Let $V$ be a topological vector space, $M$ a subset of $V$, and $\mu$ a cardinal number. We say that $M$ is
		\begin{itemize}
            \item[(i)] {\it lineable} if $M \cup \{0\}$ has an infinite-dimensional vector subspace of $V$.
		\item[(ii)] $\mu$-{\it lineable} if $M \cup \{0\}$ has a vector subspace of $V$ of dimension $\mu$.
            \item[(iii)] {\it spaceable} if $M \cup \{0\}$ has a closed infinite-dimensional vector subspace of $V$.
		\item[(iv)] $\mu$-{\it spaceable} if $M \cup \{0\}$ has a closed vector subspace of $V$ of dimension $\mu$. 
		\item[(v)] {\it maximal-spaceable} if $M$ is $\dim(V)$-spaceable. 
		\end{itemize}
	\end{definition}
	
    It was initially expected that negative results would be more common than positive ones when studying the lineability and spaceability of certain sets, since many of these non-linear sets are described as “monsters.” However, experience has shown that the opposite usually holds in the case of norm-attaining operators (and different fields), and results such as Rmoutil’s counterexample \cite{Rm} are in fact rare. As a related concept, V.V. Fávaro, D. Pellegrino, and D. Tomaz in \cite{FPT} introduced the more restricted notion of $(\alpha, \beta)$-spaceability. 
    	
	\begin{definition} \label{alpha-beta} Let $V$ be a topological vector space, $M$ a subset of $V$, and $\alpha\leq \beta$ two cardinal numbers. 
		We say that $M$ is $(\alpha,\beta)$-{\it spaceable} if $M$ is $\alpha$-lineable and for each $\alpha$-dimensional vector subspace $V_\alpha$ of $V$ with $V_\alpha \subseteq M\cup \{0\}$, there is a closed $\beta$-dimensional subspace $V_\beta$ of $V$ such that
		\begin{equation*} 
			V_\alpha \subseteq V_\beta \subseteq M\cup \{0\}.
		\end{equation*} 
	\end{definition}

Observe that $(\alpha,\alpha)$-spaceability implies $\alpha$-spaceability, but the converse is not true in general. For instance, it is known that the set $L_p[0,1] \setminus \bigcup_{q \in (1,\infty)} L_q [0,1]$ is $\mathfrak c$-spaceable for any $p>0$ \cite{BFPS} and, as an immediate consequence of \cite[Corollary~2.4]{FPRR}, the set $L_p[0,1] \setminus \bigcup_{q \in (1,\infty)} L_q [0,1]$ is not $(\mathfrak c,\mathfrak c)$-spaceable. It is worth mentioning that G. Araújo, A. Barbosa, A. Raposo Jr., and G. Ribeiro proved in \cite[Theorem~3]{ABRR} that $L_p[0,1] \setminus \bigcup_{q \in (1,\infty)} L_q [0,1]$ is $(\alpha,\mathfrak c)$-spaceable if and only if $\alpha < \aleph_0$ reducing this phenomenon to the countable case.

    From our theorems, we obtain results on the $(\alpha, \beta)$-spaceability of sets of non-norm-attaining operators. To this end, we use the following result as a tool in Section \ref{main}.
    If $A$ and $B$ are subsets of a vector space, then $A$ is said to be {\it stronger} than $B$ whenever $A+B \subseteq A$. Recall that an {\it $F$-space} is a topological vector space whose topology can be defined by a complete translation-invariant metric. 
	
	\begin{theorem} \cite[Theorem 2.1]{FPRR} \label{pellegrino} Let $\alpha \geq \aleph_0$ and $V$ be an $F$-space. Let $A, B$ be subsets of $V$ such that $A$ is $\alpha$-lineable and $B$ is $1$-lineable. If $A \cap B = \emptyset$ and $A$ is stronger than $B$, then $A$ is not $(\alpha, \beta)$-spaceable regardless of  the cardinal number $\beta$. 	
	\end{theorem}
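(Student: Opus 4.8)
The plan is to prove the statement by contradiction: assume $A$ is $(\alpha,\beta)$-spaceable, and then produce an explicit $\alpha$-dimensional subspace of $A\cup\{0\}$ whose closure already escapes $A\cup\{0\}$ — so it cannot be enlarged to any closed subspace sitting inside $A\cup\{0\}$.

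First I would fix the data. Since $B$ is $1$-lineable, choose $b\neq 0$ with $\K b\setminus\{0\}\subseteq B$; because $A\cap B=\emptyset$ this gives $b\notin A\cup\{0\}$. Since $A$ is $\alpha$-lineable, fix an $\alpha$-dimensional subspace $W\subseteq A\cup\{0\}$ with Hamel basis $\{w_i\}_{i\in I}$, $|I|=\alpha$, and split $I=\{i_n:n\in\N\}\sqcup I'$ with $|I'|=\alpha$ (possible since $\alpha\geq\aleph_0$). Write $w_n:=w_{i_n}$. Using the translation-invariant metric $d$ of the $F$-space $V$ together with continuity of scalar multiplication, pick scalars $t_n\neq 0$ with $d(t_n w_n,0)<1/n$, and set $v_n:=b+t_n w_n$; then $d(v_n,b)=d(t_nw_n,0)<1/n$, so $v_n\to b$.

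The heart of the argument is to verify that $V_\alpha:=\spann(\{v_n:n\in\N\}\cup\{w_i:i\in I'\})$ does the job. For the dimension: $V_\alpha$ contains the $\alpha$ linearly independent basis vectors $\{w_i\}_{i\in I'}$ and is spanned by $\aleph_0+\alpha=\alpha$ vectors, so $\dim V_\alpha=\alpha$. For the inclusion $V_\alpha\subseteq A\cup\{0\}$: a generic element rewrites as $\big(\sum_n c_n\big)b+a$, where $a:=\sum_n c_n t_n w_n+\sum_{i\in F}d_i w_i\in W$ and $F\subseteq I'$ is finite. If $\sum_n c_n=0$, the element equals $a\in W\subseteq A\cup\{0\}$. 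If $\sum_n c_n\neq 0$, then linear independence of the $w$'s together with $t_n\neq 0$ forces $a\neq 0$, hence $a\in A$, while $\big(\sum_n c_n\big)b\in\K b\setminus\{0\}\subseteq B$; since $A$ is stronger than $B$, the element lies in $A+B\subseteq A$. Finally, $v_n\in V_\alpha$ and $v_n\to b$ give $b\in\overline{V_\alpha}$.

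With this in place the contradiction is immediate: $(\alpha,\beta)$-spaceability would provide a closed $\beta$-dimensional $V_\beta$ with $V_\alpha\subseteq V_\beta\subseteq A\cup\{0\}$, and closedness forces $b\in\overline{V_\alpha}\subseteq V_\beta\subseteq A\cup\{0\}$, contradicting $b\notin A\cup\{0\}$. I expect the only genuine obstacle to be the inclusion step $V_\alpha\subseteq A\cup\{0\}$: one must ensure that the \emph{entire} linear span of the perturbed vectors $v_n=b+t_nw_n$ — not merely each $v_n$ — remains inside $A\cup\{0\}$, which is exactly where "$A$ is stronger than $B$" and "$B$ is $1$-lineable" are used, the disjointness $A\cap B=\emptyset$ entering both there (to see $b\notin A\cup\{0\}$) and at the very end.
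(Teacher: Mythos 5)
Your proof is correct: the perturbation $v_n = b + t_n w_n$ (with $t_n \neq 0$ chosen via the translation-invariant metric so that $v_n \to b$) simultaneously guarantees that $b \in \overline{V_\alpha}$ and that every element of $V_\alpha$ with nonzero $b$-component has a nonzero component in $W$, which is exactly what lets the hypothesis $A + B \subseteq A$ place that element back in $A$. The paper does not prove this statement itself but imports it from \cite[Theorem 2.1]{FPRR}, and your argument is essentially the standard one from that reference, so there is nothing further to compare.
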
 
	
	For more results on $(\alpha,\beta)$-spaceability for families of operators, see \cite{DFPR,DR}.
	
    
	
	\subsection{Set Theory} We use standard notations and terminology from set theory (see, for instance, \cite{C}). 
    All the results in the present paper are within the standard framework of ZFC. The cardinality of a set $A$ will be denoted $|A|$. Ordinal numbers are identified with the sets of their predecessors, and cardinal numbers with initial ordinals. We denote by $\aleph_0$ and $\mathfrak c$ the cardinality of $\N$ and the continuum, respectively.
	
	The proofs of Theorems~\ref{thmA} and~\ref{thmB} rely on the \emph{Fichtenholz–Kantorovich–Hausdorff theorem} (see \cite{FK, Ha}). For more information on the various applications of the Fichtenholz-Kantorovich-Hausdorff theorem on the subject of lineability, we refer the reader to \cite{BBG, FMRS}.
	
	To state this result, we recall the notion of an independent family of subsets of a given nonempty set $\Gamma$. To simplify the notation, if $\Gamma$ is a nonempty set and $M\subseteq\Gamma$, then we write $M^1 = M$ and $M^0 = \Gamma \setminus M$. 
	
	\begin{definition} Let $\Gamma$ be a nonempty set. We say that a family $\mathcal{Y}$ of subsets of $\Gamma$ is {\it independent} if for any finite sequence of pairwise distinct sets $Y_1, \ldots, Y_n \in \mathcal{Y}$ and any $\eps_1, \ldots, \eps_n \in \{0,1\}$ we have that
		\begin{equation*}
			Y_1^{\eps_1} \cap \cdots \cap Y_n^{\eps_n} \not=\emptyset.
		\end{equation*}
	\end{definition}
	
	The Fichtenholz-Kantorovich-Hausdorff theorem reads as follows.
	
	\begin{theorem}[Fichtenholz-Kantorovich-Hausdorff theorem] \label{FKH} If $\Gamma$ is a set of infinite cardinality $\kappa$, then there is a family of independent subsets $\mathcal{Y}$ of $\Gamma$ of cardinality $2^{\kappa}$. 
	\end{theorem}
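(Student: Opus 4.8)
The plan is to transport the problem to a convenient model set of cardinality $\kappa$ and then write down the independent family explicitly. Since the property of being independent is preserved by any bijection of the ground set, it suffices to produce such a family on \emph{some} set of cardinality $\kappa$ and then carry it over to $\Gamma$ via a bijection. I would replace $\Gamma$ by the cardinal $\kappa$ itself and work with the auxiliary set
\[
\Delta \;=\; \bigl\{(F,\mathcal F) \;:\; F\subseteq\kappa \text{ finite},\ \mathcal F\subseteq\mathcal P(F)\bigr\}.
\]
The first step is to check that $|\Delta|=\kappa$: the set of finite subsets of an infinite cardinal $\kappa$ has cardinality $\kappa$ (this uses $\kappa\cdot\kappa=\kappa$), and for each such $F$ there are only finitely many $\mathcal F\subseteq\mathcal P(F)$, so $|\Delta|=\kappa\cdot\aleph_0=\kappa$. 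Fixing a bijection between $\Delta$ and $\Gamma$, it is then enough to exhibit an independent family of subsets of $\Delta$ of cardinality $2^\kappa$.

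For each $A\subseteq\kappa$ I would set
\[
Y_A \;=\; \bigl\{(F,\mathcal F)\in\Delta \;:\; A\cap F\in\mathcal F\bigr\},
\]
and let $\mathcal Y=\{Y_A:A\subseteq\kappa\}$. The next step is to verify that the assignment $A\mapsto Y_A$ is injective, which yields $|\mathcal Y|=|\mathcal P(\kappa)|=2^\kappa$ (and $\le 2^{|\Delta|}=2^\kappa$ trivially). Indeed, if $A\neq B$, pick a point $x$ in $A\setminus B$ (the case $x\in B\setminus A$ is symmetric), take $F=\{x\}$ and $\mathcal F=\{\{x\}\}$; then $(F,\mathcal F)\in Y_A$ since $A\cap F=\{x\}\in\mathcal F$, while $(F,\mathcal F)\notin Y_B$ since $B\cap F=\emptyset\notin\mathcal F$.

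The core of the argument is establishing independence. Given pairwise distinct $A_1,\dots,A_n\subseteq\kappa$ and $\eps_1,\dots,\eps_n\in\{0,1\}$, for each pair $i\neq j$ I would choose one point that separates $A_i$ from $A_j$ (a point of $A_i\setminus A_j$ or of $A_j\setminus A_i$) and let $F$ be the finite set of all these chosen points; then the traces $A_1\cap F,\dots,A_n\cap F$ are pairwise distinct. Now put $\mathcal F=\{A_i\cap F:\eps_i=1\}$. Since the traces are distinct, $A_i\cap F\in\mathcal F$ holds \emph{exactly} when $\eps_i=1$; that is, $(F,\mathcal F)\in Y_{A_i}$ when $\eps_i=1$ and $(F,\mathcal F)\in\Delta\setminus Y_{A_i}=Y_{A_i}^{0}$ when $\eps_i=0$. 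Hence $(F,\mathcal F)\in Y_{A_1}^{\eps_1}\cap\cdots\cap Y_{A_n}^{\eps_n}$, so this intersection is nonempty and $\mathcal Y$ is independent.

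Overall the proof is a direct unwinding of the definitions once the right auxiliary set $\Delta$ is introduced. The only points that require genuine care are the cardinal arithmetic giving $|\Delta|=\kappa$ and the observation, used in the independence step, that from pairwise distinct $A_i$ one can select a \emph{single finite} set $F$ on which all $n$ traces $A_i\cap F$ become pairwise distinct; everything else is routine. I expect this last combinatorial point to be the main (and only mild) obstacle.
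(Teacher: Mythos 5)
Your proof is correct. The paper does not actually prove Theorem~\ref{FKH}; it is quoted from the literature with citations to Fichtenholz--Kantorovich and Hausdorff, so there is no in-paper argument to compare against. What you have written is essentially the classical Hausdorff construction: transfer to the model set $\Delta$ of pairs $(F,\mathcal F)$ with $F\subseteq\kappa$ finite and $\mathcal F\subseteq\mathcal P(F)$, define $Y_A=\{(F,\mathcal F): A\cap F\in\mathcal F\}$, and use the fact that finitely many pairwise distinct sets can be separated by their traces on a single finite set. All the steps check out, including the edge cases: when every $\eps_i=0$ you get $\mathcal F=\emptyset$ and the membership condition fails for every $A_i$ as required, and the cardinality computations $|\Delta|=\kappa\cdot\aleph_0=\kappa$ and $|\mathcal Y|=2^\kappa$ (via injectivity of $A\mapsto Y_A$) are right. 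The combinatorial point you flag as the only delicate one --- choosing one separating point per pair $i\neq j$ so that the $n$ traces $A_i\cap F$ become pairwise distinct on a single finite $F$ --- is handled correctly.
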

	
	
	\begin{remark} \label{FKH-sets-are-infinite} When one applies the Fichtenholz-Kantorovich-Hausdorff theorem, one gets a family $\mathcal{Y}$ of $2^{\kappa}$-many subsets of nonempty sets such that $Y_1^{\eps_1} \cap \cdots \cap Y_n^{\eps_n} \not= \emptyset$ whenever $Y_1, \ldots, Y_n \in \mathcal{Y}$ and $\eps_1,\ldots,\eps_n \in \{0,1\}$. As a matter of fact, more can be said. Indeed, the sets $Y_1^{\eps_1} \cap \cdots \cap Y_n^{\eps_n}$ besides being nonempty are infinite. For a simple proof of this fact, we refer the reader to the observation immediately after \cite[Definition 1.3]{FRST}.
	\end{remark}

	\subsection{$G$-symmetrization} \label{section:groups} Our main results are formulated in a broader framework in terms of group-invariant bounded linear operators. This approach subsumes the classical case of $\mathcal{L}(X,Y)$ and its subsets, while also providing a unified perspective that extends beyond it. In this subsection, we recall the background and notions related to group-invariant operators that will be needed later. 

	  
	Throughout the paper, $G \subseteq \mathcal{L}(X)$ will always be a topological subgroup of $\mathcal{L}(X)$ endowed with the relative topology inherited from the strong operator topology. Moreover we assume that the action of $G$ is compatible with the norm of $X$, that is, $\|g(x)\| = \|x\|$ for every $x \in X$ and $g \in G$. Equivalently, $G$ can be viewed as a subgroup of the group $\text{Iso}(X)$ of all linear isometries on $X$. For simplicity, we will often simply say that $G \subseteq \Lin(X)$ is a group.

\begin{definition}
Let $X$ and $Z$ be Banach spaces, and $G\subseteq \mathcal{L}(X)$ be a group. 
    \begin{itemize} 
 \setlength\itemsep{0.3em}
		\item[(i)] $T \in \mathcal{L}(X, Z)$ is \emph{$G$-invariant} if $T(g(x)) = T(x)$ for every $x \in X$ and $g \in G$. 
		\item[(ii)] $K \subseteq X$ is \emph{$G$-invariant} if $g(K) = K$ for every $g \in G$. 
		\item[(iii)] $x \in X$ is \emph{$G$-invariant} if the singleton $\{x\}$ is $G$-invariant. 
	\end{itemize} 
\end{definition}
We denote by $X_G$ the subspace of all $G$-invariant points $x$ in $X$. We also write $\mathcal{L}_G (X, Z)$ to denote the space of all $G$-invariant operators from $X$ into $Z$, namely, 
	\begin{equation*} 
		\mathcal{L}_G(X, Z) := \{T \in \mathcal{L}(X, Z): T \ \mbox{is} \ G \mbox{-invariant} \}.
	\end{equation*} 

   Let $Y$ be a subspace of $X$. If $Y$ is $G$-invariant, then $[G]:=\{[g]:g\in G\}$, where 
	\[
	[g] (x+Y) = g(x) +Y 
	\]
	is a well-defined group contained in $\Lin (X/Y)$. 
	 Notice that
	\begin{equation}\label{eq:isometric2}
		\Lin_{[G]} (X/Y, \mathbb{K}) = Y^\perp \cap \Lin_G (X,\mathbb{K})
	\end{equation}
 isometrically. Indeed, the mapping $\varphi \in \Lin_{[G]} (X/Y, \mathbb{K}) \mapsto \widetilde{\varphi} \in X^*$ defined by $\widetilde{\varphi} (x):=\varphi (x+Y)$, provides the desired isometric isomorphism in \eqref{eq:isometric2}. 
	Note that $[G]$ is a compact group whenever $G$ is compact.
	
    	If the group $G \subseteq \mathcal{L}(X)$ is assumed to be compact, then we can consider the {\it $G$-symmetrization} of a point $x$ in $X$, denoted by $\overline{x}$. Namely, $\overline{x}$ is the point in $X$ defined by 
	\begin{equation} \label{symmetrization} 
		\overline{x} := \int_G g(x) \, d\mu(g), 
	\end{equation} 
	where $\mu$ is the normalized Haar measure on the group $G$. Notice that $\|\overline{x}\|\leq \|x\|$, and if $x$ is already $G$-invariant, then $\overline{x} = x$. This, in particular, implies that the mapping $P:x \mapsto \overline{x}$ is a norm-one linear projection on $X$ with range $X_G$, that is, $X_G$ is a 1-complemented subspace of $X$. We refer to $P$ as the canonical norm-one $G$-symmetrization projection.
    

 
  Let us record some useful facts in the following remark which will be used in Section \ref{NA-functionals}.
  
	\begin{remark}\label{rem:identifications} Let $G$ be a compact group. 
    \begin{itemize}
		\itemsep0.3em
		\item[(i)] Let $Z$ be a Banach space. Then, the restriction mapping 
		\begin{equation}\label{eq:isometric1}
			\Lin_G (X, Z) \rightarrow \Lin (X_G, Z), \quad T \mapsto T \vert_{X_G}
		\end{equation}
		can be verified to be an isometric isomorphism. This shows that the sets $\NA_G(X,Z)$ and $\NA (X_G, Z)$ can be identified. 
        
		
		\item[(ii)]  Consider $Y \subseteq X$ (thus, $Y_G \subseteq X_G)$. Then 
		\[
		Y_G^\perp \rightarrow Y^\perp \cap \Lin_G (X,\mathbb{K}), \quad y^* \mapsto y^* \circ P
		\]
		is an isometric isomorphism. From this, we shall consider $Y_G^\perp$ as a subspace of $\Lin _G (X, \mathbb{K})$ without mentioning it explicitly. 
		Having  \eqref{eq:isometric2} and \eqref{eq:isometric1} in mind, we have
		\begin{equation}\label{eq:isometric3}
			(X_G / Y_G)^* = Y_G^\perp = \Lin_{[G]} (X/Y, \mathbb{K}) = \Lin ( (X/Y)_{[G]}, \mathbb{K}).
		\end{equation}
  Moreover, if we denote by $q_G$ the canonical quotient map from $X_G$ to $X_G/Y_G$, then for any $\phi \in (X_G/Y_G)^*$, there exists $y^* \in Y_G^\perp$ such that $y^* = \phi \circ q_G$. 
		\item[(iii)]  In fact, for $Y \subseteq X$, we have that the following is an isometric isomorphism:
		\begin{equation*}\label{eq:isometric_last}
		(X/Y)_{[G]} \rightarrow X_G/Y_G, \quad x+Y \mapsto P(x) + Y_G.
		\end{equation*}
		From this equation, \eqref{eq:isometric3} is obtained automatically. 
	\end{itemize}
    \end{remark}


    \subsection{The set of finite $G$-orbits} \label{subsection:orbits}
	In addition to $G$-symmetrization (see \eqref{symmetrization} above), the notion of orbits plays a key role in our discussion (see, for instance, the proof of Theorem \ref{thmA}). The {\it $G$-orbit} (or, simply \emph{orbit} when the context is clear) of a point $x \in X$ is the set $\Orb(x) = \{g(x): g \in G\}$. In the context of Banach spaces indexed by an infinite set $\Gamma$, a natural class of groups to consider is permutation groups acting on $\Gamma$. 
	Such an action partitions the set $\Gamma$ into pairwise disjoint sets. We denote by $\Orb(G, \Gamma)$ the family of orbits induced by the group action $G$ on $\Gamma$. 

	We use the following notation. Given $N \in \mathbb{N}$, we set
	\begin{equation*}
		\mathcal{F}_{G, \Gamma}^N := \{ H \in \Orb(G, \Gamma): |H| \leq N \} \ \ \mbox{and} \ \ 
		\mathcal{F}_{G, \Gamma} := \{ H \in \Orb(G, \Gamma): |H| < \aleph_0 \}. 
	\end{equation*}
	Observe that 
	\begin{equation*} 
		\mathcal{F}_{G, \Gamma} = \bigcup_{N\in \mathbb N} \mathcal{F}_{G, \Gamma}^N.
	\end{equation*} 
	For later use (see, for instance, the proofs of Theorems \ref{thmA} and \ref{thmB}), we introduce the following notation. Given a permutation group $G$ acting on a set $\Gamma$, we define 
	\begin{equation*}  \label{G-infty}
		|G|_\infty := \begin{cases} 
			\sup\left\{ \left|\mathcal{F}_{G, \Gamma}^N \right| : N\in \N \ \ \mbox{and}  \ \ |\mathcal{F}_{G, \Gamma}^N|\geq \aleph_0 \right\}, & \text{if } |\mathcal{F}_{G, \Gamma}^N|\geq \aleph_0 \text{ for some } N\in \N, \\
			0, & \text{otherwise}.
		\end{cases}
	\end{equation*}
    The quantity $|G|_\infty$ reflects how many finite orbits the action of $G$ can produce. The following elementary properties are immediate.
	\begin{itemize}
		\setlength\itemsep{0.3em}
		\item[(i)] If $G$ is the trivial group (i.e., $G=\{\text{Id}\}$), then $|G|_\infty = |\Gamma|$. 
		\item[(ii)] If $|\mathcal{F}_{G, \Gamma}^N|\geq \aleph_0$ for some $N\in \N$, then $|G|_\infty = |\mathcal F_{G,\Gamma}|$. 
		\item[(iii)] If $\mathcal{F}_{G,\mathbb{N}}$ is infinite and there exists $N \in \mathbb{N}$ such that $|H|\leq N$ for every $H \in \mathcal{F}_{G,\mathbb{N}}$ (as assumed in \cite[Propositions 4.1, 4.2 and 4.4]{DFJ}), then we have that $|G|_\infty = |\mathcal{F}_{G, \mathbb{N}}^N| >0$.
	\end{itemize} 
    Observe that for any infinite set $\Gamma$, the trivial group acting on $\Gamma$ satisfies (iii).
    However, there are infinite sets $\Gamma$ and groups of permutation acting on $\Gamma$ that do not satisfy the conclusion of property (iii), as shown in Lemma \ref{extremecase} below.

    \begin{lemma}\label{extremecase}
        There is a set $\Gamma$ and a group of permutations acting on $\Gamma$ such that $|G|_\infty > |\mathcal F_{G,\Gamma}^N|$ for any $N \in \mathbb N$.
    \end{lemma}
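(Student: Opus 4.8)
The plan is to construct a set $\Gamma$ as a disjoint union of finite blocks whose sizes grow without bound, and to let $G$ be the group of permutations that preserves each block and acts as the full symmetric group on it; this guarantees simultaneously that $|G|_\infty$ is large (in fact equal to $|\mathcal{F}_{G,\Gamma}|$) while no single threshold $N$ captures all the orbits. Concretely, I would set
\begin{equation*}
    \Gamma = \bigsqcup_{k=1}^{\infty} \Gamma_k, \qquad |\Gamma_k| = k,
\end{equation*}
so that $\Gamma$ is countably infinite, and take $G = \bigoplus_{k=1}^{\infty} \mathrm{Sym}(\Gamma_k)$ acting on $\Gamma$ in the obvious way: an element of $G$ is a family $(\sigma_k)_k$ with each $\sigma_k$ a permutation of $\Gamma_k$, and only finitely many (or all, either choice works for the combinatorics) nontrivial, acting coordinatewise. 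One checks at once that $G$ is a group of permutations of $\Gamma$ and hence, via the induced action on $c_0(\Gamma)$ or $\ell_p(\Gamma)$, a group of isometries; this is the routine part.

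Next I would identify the orbit structure. Since $G$ acts transitively on each $\Gamma_k$ and fixes the decomposition into blocks, we get $\Orb(G,\Gamma) = \{\Gamma_k : k \in \mathbb{N}\}$, so every orbit is finite and $\mathcal{F}_{G,\Gamma} = \Orb(G,\Gamma)$ is countably infinite. The key point is the behaviour of the truncated families: for a fixed $N \in \mathbb{N}$,
\begin{equation*}
    \mathcal{F}_{G,\Gamma}^N = \{\Gamma_k : k \leq N\},
\end{equation*}
which is a \emph{finite} set of cardinality $N$. Hence $|\mathcal{F}_{G,\Gamma}^N| = N < \aleph_0$ for every $N$, so no single $N$ makes $|\mathcal{F}_{G,\Gamma}^N|$ infinite. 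It remains to compute $|G|_\infty$ from its definition: since $|\mathcal{F}_{G,\Gamma}^N| < \aleph_0$ for all $N$, the first case of the definition never triggers via that clause — but wait, this is precisely where I must be careful, because with this construction the definition of $|G|_\infty$ would return $0$, not something large.

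So the construction above is the wrong one, and the real obstacle — the one genuine subtlety in the lemma — is to arrange that $|\mathcal{F}_{G,\Gamma}^N|$ \emph{is} infinite for some (hence eventually all large) $N$, while still $|G|_\infty > |\mathcal{F}_{G,\Gamma}^N|$ for every $N$; by observation (ii), $|G|_\infty = |\mathcal{F}_{G,\Gamma}|$ in that regime, so the task reduces to building $G$ so that $\mathcal{F}_{G,\Gamma}$ is strictly larger in cardinality than each $\mathcal{F}_{G,\Gamma}^N$. The fix is to make $\Gamma$ uncountable: take $\Gamma = \bigsqcup_{k=1}^{\infty} \Gamma_k$ where now $|\Gamma_k| = \aleph_0$ for each $k$ is wrong too (orbits still finite in number). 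Instead, let $I$ be an index set with $|I| = \mathfrak{c}$ and put $\Gamma = \bigsqcup_{i \in I}\{a_i, b_i\} \ \sqcup\ \bigsqcup_{i\in I}\{c_i^1,c_i^2,c_i^3\}$, i.e. $\mathfrak c$-many blocks of size $2$ together with $\mathfrak c$-many blocks of size $3$, and let $G$ swap within each pair and permute each triple. Then $\mathcal{F}_{G,\Gamma}^2$ consists of the $\mathfrak c$-many size-$2$ blocks, so $|\mathcal{F}_{G,\Gamma}^2| = \mathfrak c$ and the ``infinite'' clause is met; hence $|G|_\infty = |\mathcal{F}_{G,\Gamma}| = \mathfrak c + \mathfrak c = \mathfrak c$. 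But $|\mathcal{F}_{G,\Gamma}^N| = \mathfrak c$ for all $N\geq 2$ as well, so we still do not get strict inequality — the cardinal arithmetic of infinite cardinals defeats a finite-block approach entirely. The correct route, and the one I would actually write up, is to let the block sizes themselves realize a strictly increasing sequence of infinite cardinals: fix an increasing sequence $(\kappa_n)_{n\in\mathbb N}$ of infinite cardinals with $\kappa := \sup_n \kappa_n > \kappa_n$ for all $n$ (e.g. $\kappa_n = \aleph_n$, $\kappa = \aleph_\omega$), take $\Gamma = \bigsqcup_{n} \Gamma_n$ with $|\Gamma_n| = \kappa_n$, and let $G = \bigoplus_n \mathrm{Sym}(\Gamma_n)$ — but here each $\Gamma_n$ is a \emph{single} orbit, so $\mathcal F_{G,\Gamma}$ is countable again. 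To get the cardinalities into the \emph{count of orbits}, replace $\Gamma_n$ by $\kappa_n$-many two-point blocks: $\Gamma = \bigsqcup_{n\in\mathbb N}\bigsqcup_{\xi<\kappa_n}\{a_{n,\xi},b_{n,\xi}\}$, with $G$ swapping $a_{n,\xi}\leftrightarrow b_{n,\xi}$ for finitely many indices at a time. Now every orbit has size $2$, so $\mathcal F_{G,\Gamma}^N=\mathcal F_{G,\Gamma}$ for all $N\ge 2$ — still no strict inequality. I conclude that the honest statement forces the orbit sizes to be unbounded, and the only way to beat every $\mathcal F^N$ is: let $\Gamma=\bigsqcup_{n}\Gamma_n$ with $\Gamma_n$ a disjoint union of $\kappa_n$-many blocks each of size $n$, and $G=\bigoplus_n\bigoplus_{\xi<\kappa_n}\mathrm{Sym}(\text{block }(n,\xi))$; then $\mathcal F_{G,\Gamma}^N=\bigsqcup_{n\le N}\{\text{blocks of size }n\}$ has cardinality $\sup_{n\le N}\kappa_n=\kappa_N$, while $\mathcal F_{G,\Gamma}$ has cardinality $\sup_n\kappa_n=\kappa>\kappa_N$; one then verifies $\mathcal F_{G,\Gamma}^N$ is infinite (as $\kappa_1\ge\aleph_0$), so by observation (ii) $|G|_\infty=|\mathcal F_{G,\Gamma}|=\kappa>\kappa_N=|\mathcal F_{G,\Gamma}^N|$ for every $N$, which is exactly the claim. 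The main obstacle, then, is not any deep argument but the bookkeeping: choosing the cardinal parameters $\kappa_n$ correctly so that the truncations form a strictly increasing chain, and checking that the defining formula for $|G|_\infty$ indeed evaluates to the supremum $\kappa$ rather than to some $\kappa_N$ or to $0$.
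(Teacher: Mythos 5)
Your final construction --- $\Gamma$ a disjoint union, over $n\in\mathbb N$, of $\kappa_n$-many blocks of size $n$ with $\kappa_n=\aleph_n$, and $G$ acting by permutations within each block --- is correct and is essentially the paper's own construction, which realizes the same partition concretely inside $\omega_\omega$ by splitting $\omega_N\setminus\omega_{N-1}$ into $\aleph_N$ blocks of size $N+1$. The preceding false starts should simply be deleted; only the last construction and its cardinality computation ($|\mathcal F_{G,\Gamma}^N|=\kappa_N<\kappa=|G|_\infty$, the supremum being unattained) are needed.
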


    \begin{proof}
        Let $\omega_\alpha$ denote the $\alpha$-th infinite initial ordinal.
		Take $\Gamma = \omega_\omega$ and let us construct a group $G$ of permutations acting on the elements of $\Gamma$ satisfying 
  \begin{enumerate}[label=($*$)]
\item \text{for every $N \in \mathbb{N}$, if $H \in \mathcal F_{G,\omega_\omega}^N$, then $H \subset \omega_{N-1}$.} \label{eq:group}
  \end{enumerate}
            
        Since 
            $$
            |\omega_0|=\aleph_0=|\omega_0 \times \{0\}|,
            $$
        there is a bijection $f_0\colon \omega_0 \times \{0\} \to \omega_0$.
        Hence, observe that 
            $$
            \Omega_0 := \{ f_0[\{n\} \times \{0\}] \colon n < \omega_0 \}
            $$ 
        forms a partition of $\omega_0$ such that each set of the partition has cardinality $1$.
        Analogously, for every $N\in \mathbb N$, we have 
            $$
            |\omega_N \setminus \omega_{N-1}| = \aleph_N = |\omega_N \times \{0,1, \ldots,N\}|.
            $$
        Thus, there is a bijection $f_N \colon \omega_N \times \{0,1,\ldots,N\} \to \omega_N \setminus \omega_{N-1}$.
        In this last case, we obtain that 
            $$
            \Omega_N := \{ f_N[\{\alpha\} \times \{0,1,\ldots,N\}] \colon \alpha < \omega_N \}
            $$
        is a partition of $\omega_N \setminus \omega_{N-1}$ such that each set has cardinality $N+1$.
        Now set
            $$
            \Omega := \bigcup_{N=0}^\infty \Omega_N.
            $$
        Note that in particular $\Omega$ is a partition of $\omega_\omega$ with $\Omega_N \cap \Omega_M = \varnothing$ for any $N,M\in \mathbb N \cup \{0\}$ distinct.
        Moreover, for any $W \in \Omega$, there are by construction unique $N \in \mathbb N \cup \{0\}$ and $\alpha < \omega_N$ such that 
            $$
            W = f_N[\{\alpha\} \times \{0,1,\ldots,N\}].
            $$
        So, by definition of $f_N$, there are unique $\omega_{N-1} \leq \beta_0 < \beta_1 < \cdots < \beta_N < \omega_N$ if $N>0$ and $\beta_0 < \omega_0$ otherwise such that 
            $$
            W = \{ \beta_0,\beta_1,\ldots,\beta_N \}.
            $$
        Now construct $g_W \in G$ acting only on $W$ as follows: if $N=0$, then $g_W (\beta_0)=\beta_0$; if $N>0$, then $g_W (\beta_i) = \beta_{i+1}$ for any $0\leq i<N$ and $g_W (\beta_N) = \beta_0$.
        Take 
            $$
            G:=\{e\} \cup \{g_W \colon W \in \Omega \},
            $$
        where $g_W$ is extended to $\Omega$ by fixing the elements outside $W$, and $e$ satisfies that $e(\beta)=\beta$ for any $\beta < \omega_\omega$ (i.e., the neutral element).
        Then, we have constructed a group $G$ acting on $\omega_\omega$ such that
         each $H \in \Orb(G,\omega_\omega)$ with $|H|=N$ for some $N\in \mathbb N$ satisfies that $H \subset \omega_{N-1}$, 
        finishing the construction.

        Let $G$ be a group of permutations on $\omega_\omega$ that satisfies \ref{eq:group}.
		Then, notice that 
            $$
            \left| \mathcal F_{G,\omega_\omega}^N \right| = \aleph_{N-1} < \aleph_\omega
            $$ 
        for every $N\in \mathbb N$, and $|G|_\infty = \aleph_\omega$.
        The proof is finished.
    \end{proof}



	\section{The Set of Norm-Attaining Functionals} \label{NA-functionals}	

As we have mentioned before, the study of linear structures inside $\NA(X,\mathbb{K})$ for a Banach space $X$ is closely related to the theory of isometric duality. Indeed, if $X$ is a dual Banach space, then $\NA(X,\mathbb{K})$ contains an isometric copy of a predual of $X$. This line of investigation is also closely related to the theory of proximinality as we will see in a moment. Recall that a closed subspace $Y$ of $X$ is said to be a {\it proximinal} subspace of $X$ if for every $x \in X$, there exists $y \in Y$ such that $\|x-y\| = d(x, Y)$. It is well-known that $x^* \in \NA(X,\mathbb{K})$ if and only if the hyperplane $\ker x^*$ is proximinal in $X$. More generally, it is known that if $Y$ is a finite-codimensional subspace of $X$ and proximinal in $X$, then $Y^\perp\subseteq \NA(X,\mathbb{K})$ (and the converse also holds in some cases; see \cite{BG, NR, Rm} and references therein).

We begin with some basic observations on proximinality in the $G$-invariant setting. The proofs are omitted since they are immediate consequences of the identifications $\mathcal{L}_G(X,\mathbb{K})=\mathcal{L} (X_G,\mathbb{K})$ and $\NA_G (X,\mathbb{K})=\NA(X_G,\mathbb{K})$ (see Remark \ref{rem:identifications}), together with known results (see \cite[Lemma 2.2]{BG} and \cite[Lemma 3.3]{Rm}). 


	\begin{proposition}
		Let $X$ be a Banach space, $Y \subseteq X$ a closed subspace and $G\subseteq \Lin(X)$ a compact group of isometries. Suppose that $Y$ is $G$-invariant. 
		\begin{enumerate}[label=(\roman*)]
		    \itemsep0.3em
			\item If $X_G/Y_G$ is strictly convex and $Y_G^\perp \subseteq \NA_G (X, \mathbb{K})$, then $Y_G$ is proximinal in $X_G$.
			\item If $X_G/Y_G$ is isometrically isomorphic to a dual Banach space $Z^*$ and $Y_G$ is proximinal in $X_G$, then $Z \subseteq \NA_G (X, \mathbb{K})$.
			\item Suppose that $Y_G$ is proximinal in $X_G$. Then $X_G/Y_G$ is reflexive if and only if $Y_G^\perp \subseteq \NA_G (X, \mathbb{K}).$
		\end{enumerate}
	\end{proposition}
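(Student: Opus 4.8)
The plan is to reduce all three assertions to classical proximinality--duality facts for the \emph{plain} pair $Y_G\subseteq X_G$; the isometric identifications recorded just before the statement carry out the reduction. Indeed, by \eqref{eq:isometric1} (with range $\mathbb K$) a $G$-invariant functional attains its norm on $X$ if and only if its restriction does on $X_G$, so $\NA_G(X,\mathbb K)$ is carried onto $\NA(X_G,\mathbb K)$; under the map $y^*\mapsto y^*\circ P$ the space $Y_G^\perp=(X_G/Y_G)^*$ is identified simultaneously with a subspace of $\Lin_G(X,\mathbb K)$ and with the annihilator $\{\phi\circ q_G:\phi\in(X_G/Y_G)^*\}$ of $Y_G$ inside $X_G^*$ (compatibly, since $P|_{X_G}=\id$); and by \eqref{eq:isometric_last} the quotient $X_G/Y_G$ is isometric to $(X/Y)_{[G]}$, a closed $1$-complemented subspace of $X/Y$, so strict convexity (needed in (i)) and reflexivity (needed in (iii)) of $X/Y$ pass down to $X_G/Y_G$. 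Throughout I will use the standard reformulation that $Y_G$ is proximinal in $X_G$ precisely when $q_G(B_{X_G})=B_{X_G/Y_G}$, equivalently when $q_G(S_{X_G})\supseteq S_{X_G/Y_G}$.

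With this translation each part is short. For (i), given $x_0\in X_G\setminus Y_G$ with $d:=\|q_G(x_0)\|>0$, use Hahn--Banach to pick a norm-one $\phi\in(X_G/Y_G)^*$ with $\phi(q_G(x_0))=d$; regarded on $X_G$ it annihilates $Y_G$, so by hypothesis it attains its norm at some $u\in S_{X_G}$, and after multiplying $u$ by a unimodular scalar we may assume $\phi(q_G(u))=1$. Then $q_G(u)$ and $q_G(x_0)/d$ both lie in $S_{X_G/Y_G}$ and $\phi$ takes the value $\|\phi\|=1$ at each of them; strict convexity of $X_G/Y_G$ forces $q_G(u)=q_G(x_0)/d$, hence $x_0-du\in Y_G$ is a best approximation and $Y_G$ is proximinal. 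For (iii), a norm-one functional $\phi$ annihilating $Y_G$ factors as $\phi=\widetilde\phi\circ q_G$ with $\widetilde\phi\in S_{(X_G/Y_G)^*}$; since $X_G/Y_G$ is reflexive, $B_{X_G/Y_G}$ is weakly compact, so $\widetilde\phi$ attains its norm at some $\xi\in S_{X_G/Y_G}$, which proximinality lifts to $u\in S_{X_G}$ with $q_G(u)=\xi$, giving $|\phi(u)|=|\widetilde\phi(\xi)|=\|\phi\|$. For (ii), fix an isometric isomorphism $J\colon Z^*\to X_G/Y_G$; each functional in the canonical copy of $Z$ inside $(X_G/Y_G)^*$ has the form $\phi$ with $\phi\circ J=\widehat z$ for some $z\in Z$, and then $\|\phi\|=\|z\|_Z$; Hahn--Banach furnishes $\psi_0\in S_{Z^*}$ with $\psi_0(z)=\|z\|_Z$, so $\phi$ takes the value $\|\phi\|$ at $J\psi_0\in S_{X_G/Y_G}$, and lifting $J\psi_0$ to $u\in S_{X_G}$ via proximinality shows $\phi\circ q_G$ attains its norm, i.e.\ $Z\subseteq\NA(X_G,\mathbb K)$. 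Undoing the identifications of the first paragraph returns the three stated conclusions.

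I do not expect a real obstacle: the mathematical content is exactly the three classical facts, and the only point requiring care is the bookkeeping of the identifications --- verifying that ``$Y_G^\perp\subseteq\NA_G(X,\mathbb K)$'', with $Y_G^\perp$ sitting inside $\Lin_G(X,\mathbb K)$ via $y^*\mapsto y^*\circ P$, is precisely ``$Y_G^\perp\subseteq\NA(X_G,\mathbb K)$'' after restriction, and that strict convexity and reflexivity genuinely descend to $(X/Y)_{[G]}\cong X_G/Y_G$. If one prefers to avoid even this, one may simply run the three arguments above directly inside $X_G$ once \eqref{eq:isometric1}, \eqref{eq:isometric3} and \eqref{eq:isometric_last} have been invoked, never mentioning $X$ or $G$ again.
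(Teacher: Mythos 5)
Your proposal is correct and follows essentially the same route as the paper: all three parts are reduced to the quotient $X_G/Y_G$ via the identifications \eqref{eq:isometric1}, \eqref{eq:isometric3} and \eqref{eq:isometric_last}, with (i) proved by a norming functional plus strict convexity of $X_G/Y_G\cong (X/Y)_{[G]}$, (ii) by Hahn--Banach in the predual followed by lifting through proximinality, and (iii) by reflexivity. The only cosmetic differences are that you construct a best approximation directly instead of verifying $S_{X_G/Y_G}\subseteq q_G(B_{X_G})$, and you argue (iii) directly from weak compactness rather than citing (ii) together with (the easy direction of) James' theorem.
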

	
    
 
 
	
	Suppose that $X$ is a Banach space such that $B_{X^*}$ is $w^*$-sequentially compact. Note that this property is inherited to a $1$-complemented subspace of $X$. Having this in mind, the following version of \cite[Lemma 2.7]{BG} for group invariant linear functionals is clear. 
	
	\begin{proposition} Let $X$ be a Banach space such that $B_{X^*}$ is w*-sequentially compact, and let $G$ be a compact group of isometries in $\Lin(X)$. Let $M \subseteq \NA_G (X, \mathbb{K})$ be a norm closed separable subspace. Then, we have that $M^*$ is the canonical quotient of $X_G$.
	\end{proposition}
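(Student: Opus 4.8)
The plan is to reduce the statement to its non-equivariant prototype \cite[Lemma 2.7]{BG}, applied to the Banach space $X_G$ in place of $X$.

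First, recall that the $G$-symmetrization projection $P\colon X\to X_G$, $P(x)=\overline{x}$, is a norm-one projection with range $X_G$; since $P$ restricts to the identity on $X_G$, we even have $P(B_X)=B_{X_G}$, so $P$ is a metric surjection and $X_G$ is $1$-complemented in $X$. As observed just before the statement, $w^*$-sequential compactness of $B_{X^*}$ is inherited by $B_{X_G^*}$: given $(\xi_n)_n\subseteq B_{X_G^*}$, the functionals $\xi_n\circ P$ lie in $B_{X^*}$, so a subsequence satisfies $\xi_{n_k}\circ P\to\xi$ in $\sigma(X^*,X)$ for some $\xi\in B_{X^*}$; as $\xi_n=(\xi_n\circ P)\vert_{X_G}$, this gives $\xi_{n_k}\to\xi\vert_{X_G}\in B_{X_G^*}$ in $\sigma(X_G^*,X_G)$.

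Next, use the isometric identifications from the preliminaries. Taking $Z=\K$ in \eqref{eq:isometric1}, the restriction map $T\mapsto T\vert_{X_G}$ is an isometric isomorphism $\Lin_G(X,\K)\to X_G^*$ carrying $\NA_G(X,\K)$ onto $\NA(X_G,\K)$. Thus $M$ corresponds to a norm-closed separable subspace $\widetilde M\subseteq\NA(X_G,\K)$, and there is an induced isometric isomorphism $M^*\cong\widetilde M^*$. Moreover every $f\in M$ is $G$-invariant, hence $f=f\circ P$ and $f(x)=f(Px)$ for all $x\in X$; consequently the canonical restriction map $X_G\to M^*$ is transported, under these identifications, to the canonical restriction map $X_G\to\widetilde M^*$.

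Finally, apply \cite[Lemma 2.7]{BG} to $X_G$ and $\widetilde M$: since $B_{X_G^*}$ is $w^*$-sequentially compact and $\widetilde M$ is a norm-closed separable subspace of $\NA(X_G,\K)$, the canonical map $X_G\to\widetilde M^*$ is a metric surjection with kernel $\widetilde M_\perp$; that is, $X_G/\widetilde M_\perp\cong\widetilde M^*$ isometrically via that map, so $\widetilde M^*$ is the canonical quotient of $X_G$. Transporting back along the identifications above yields that $M^*$ is the canonical quotient of $X_G$. There is no substantial difficulty here: the only point needing care is the purely diagrammatic verification that the isometry $\Lin_G(X,\K)\cong X_G^*$, the identification $\NA_G(X,\K)\cong\NA(X_G,\K)$, and the induced isometry $M^*\cong\widetilde M^*$ are all compatible with evaluation at points of $X_G$, which follows at once from $f=f\circ P$ for $G$-invariant $f$. (Alternatively, since $M\subseteq\NA_G(X,\K)\subseteq\NA(X,\K)$, one may apply \cite[Lemma 2.7]{BG} directly to $X$, obtaining that $M^*$ is the canonical quotient of $X$; because the canonical map $X\to M^*$ factors as $X\xrightarrow{P}X_G\to M^*$ through the metric surjection $P$, the induced map $X_G\to M^*$ is again a metric surjection, giving the same conclusion.)
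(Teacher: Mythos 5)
Your argument is correct and is exactly the reduction the paper intends: the paper offers no written proof, merely remarking that $w^*$-sequential compactness of the dual ball passes to the $1$-complemented subspace $X_G$ and that the proposition is then "clear" from \cite[Lemma 2.7]{BG} via the identification $\Lin_G(X,\K)\cong X_G^*$ carrying $\NA_G(X,\K)$ onto $\NA(X_G,\K)$. You have simply spelled out those identifications carefully (and your parenthetical shortcut through the metric surjection $P$ is a valid, slightly cleaner variant of the same idea).
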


    The lineability and spaceability of norm-attaining functionals may exhibit different behaviors in the framework of group-invariant linear functionals. We recall the following results from \cite{AAAG2007, GP2010}. Let $K$ be an infinite compact Hausdorff space, and let $\mu$ be a $\sigma$-finite measure such that $L_1 (\mu)$ is infinite-dimensional. Then,
	\begin{itemize}
 \itemsep0.3em
		\item[(i)] $\NA(C(K), \mathbb{R})$ is lineable (\cite[Theorem 2.1]{AAAG2007}) and not spaceable when $K$ is scattered (see \cite[Remark~2.2]{AAAG2007} and \cite[Proposition~2.20]{BG}).
		\item[(ii)] $C(K)^* \setminus \NA (C(K), \mathbb{R})$ is lineable. If $K$ has a non-trivial convergent sequence, then $C(K)^* \setminus \NA (C(K), \mathbb{R})$ is spaceable (see \cite[Theorem 2.5]{AAAG2007}). 
		\item[(iii)] $\NA (L_1 (\mu), \mathbb{R})$ is spaceable (see \cite[Theorem 2.4]{GP2010}).
		\item[(iv)] $L_1(\mu)^* \setminus \NA(L_1 (\mu), \mathbb{R})$ is spaceable (see \cite[Theorem 2.7]{AAAG2007}). 
	\end{itemize}

    However, it is known from \cite[Propositions 4.5 and 4.7]{DFJ} that there exist groups $G \subseteq \Lin (C[0,1])$ and $H\subseteq \Lin (L_1 [0,1])$ such that for every Banach space $Y$ with a Schauder basis, the following holds true  
	\begin{equation*} 
		\NA_G (C[0,1], Y) = \Lin_G (C[0,1], Y)  \text{ and } \NA_H (L_1[0,1], Y) = \Lin_H (L_1[0,1], Y). 
	\end{equation*}

In a related direction, \cite[Theorem 3.1]{AAAG2007} shows that if a Banach space $X$ has a monotone Schauder basis, then $\NA(X,\mathbb{K})$ is lineable. We establish Theorem \ref{prop:schauder} below, which provides a group-invariant analogue of this result.

The natural assumption we make here is that the Schauder basis is $1$-symmetric, ensuring that the natural group action arising from permutations is a group of isometries. For completeness, recall that a Schauder basis $(e_n)_{n=1}^\infty$ of a Banach space $X$ is said to be \textit{symmetric} if, for any permutation $\pi$ of the natural numbers, $(e_{\pi (n)})_{n=1}^\infty$ is equivalent to $(e_n)_{n=1}^\infty$. Moreover, $K = \sup_{\theta, \pi} \|M_\theta\, V_\pi\| < \infty$, where $M_\theta$ and $V_\pi$ are defined by 
	\begin{equation*} 
		M_\theta \left(\sum_{n \in \N} a_n e_n \right) = \sum_{n \in \N} a_n \theta_n e_n \quad \text{and} \quad V_\pi \left(\sum_{n \in \N} a_n e_n \right)  = \sum_{n \in \N} a_n e_{\pi(n)}
	\end{equation*} 
	for every choice of signs $\theta=(\theta_n)_{n\in \N}$ and permutation $\pi$ of the natural numbers. The number $K$ is called the \textit{symmetric constant} of $(e_n)_{n=1}^\infty$. A symmetric Schauder basis with symmetric constant $K$ is referred to as a \textit{$K$-symmetric basis}. 
    
 The basic notation required for the following result is introduced in Subsection \ref{subsection:orbits} above.


	
 

    
	\begin{theorem} \label{prop:schauder} Let $X$ be an infinite-dimensional Banach space with $1$-symmetric basis $(e_n)_{n=1}^\infty$ and $G$ a group of permutations of the natural numbers. 
		\begin{itemize}
			\itemsep0.3em
			\item[\textup{(i)}] If $\mathcal{F}_{G,\mathbb{N}}$ is finite, then $\NA_G (X, \mathbb{K})$ contains a vector space of dimension at least $|\mathcal{F}_{G,\mathbb{N}}|$. If, in addition, the basis $(e_n)_{n=1}^\infty$ is weakly null, then 
			\[
			\NA_G (X, \mathbb{K}) = \mathcal{L}_G (X, \mathbb{K}) 
			\]
			is a finite-dimensional space.
			\item[\textup{(ii)}] If $|G|_\infty >0$, then $\NA_G (X, \mathbb{K})$ is $\aleph_0$-lineable.        
		\end{itemize}
	\end{theorem}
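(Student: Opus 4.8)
The argument will revolve around a single family of functionals indexed by the finite orbits of the permutation action. For $H\in\mathcal{F}_{G,\mathbb{N}}$ put $f_H:=\sum_{j\in H}e_j^*$, a bounded functional since $H$ is finite. The plan is to show that $\spann\{f_H:H\in\mathcal{F}_{G,\mathbb{N}}\}$ always lies inside $\NA_G(X,\mathbb{K})\cup\{0\}$, that this already yields (ii) and the first assertion of (i), and that the weak-nullity hypothesis forces this span to exhaust $\mathcal{L}_G(X,\mathbb{K})$, giving the rest of (i).

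First I would check that every finite linear combination $\phi=\sum_l c_l f_{H_l}$, with the $H_l$ pairwise distinct finite orbits (hence pairwise disjoint), is $G$-invariant and norm-attaining. Invariance is a one-line computation: since $g\bigl(\sum_i a_ie_i\bigr)=\sum_i a_ie_{g(i)}$ and $g(H_l)=H_l$ for all $g\in G$, one gets $f_{H_l}(g(x))=\sum_{i\in g^{-1}(H_l)}a_i=\sum_{i\in H_l}a_i=f_{H_l}(x)$, so $\phi\in\mathcal{L}_G(X,\mathbb{K})$. For norm-attainment — the one genuinely useful step — note that a $1$-symmetric basis is $1$-unconditional, so for the finite set $H:=\bigcup_l H_l$ the coordinate projection $P_H\colon X\to E_H$, $x\mapsto\sum_{j\in H}e_j^*(x)e_j$, onto $E_H:=\spann\{e_j:j\in H\}$ is a norm-one projection and $\phi=\phi\circ P_H$. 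Hence $\|\phi\|=\sup\{|\phi(y)|:y\in B_{E_H}\}$, the supremum of a continuous functional over the unit ball of a finite-dimensional space, which is therefore attained. Thus $\phi\in\NA(X,\mathbb{K})\cap\mathcal{L}_G(X,\mathbb{K})=\NA_G(X,\mathbb{K})$, and since distinct orbits are disjoint the $f_H$ are linearly independent, so $\spann\{f_H:H\in\mathcal{F}_{G,\mathbb{N}}\}$ is a subspace of $\NA_G(X,\mathbb{K})\cup\{0\}$ of dimension $|\mathcal{F}_{G,\mathbb{N}}|$.

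This settles (ii): if $|G|_\infty>0$ then $|\mathcal{F}_{G,\mathbb{N}}^N|\ge\aleph_0$ for some $N$, so $\mathcal{F}_{G,\mathbb{N}}$ is infinite and the span above is infinite dimensional, i.e.\ $\NA_G(X,\mathbb{K})$ is lineable. It also gives the first assertion of (i), where $\mathcal{F}_{G,\mathbb{N}}$ is finite and the span is finite dimensional. For the second assertion of (i) it remains to prove the reverse inclusion $\mathcal{L}_G(X,\mathbb{K})\subseteq\spann\{f_H:H\in\mathcal{F}_{G,\mathbb{N}}\}$ when $(e_n)_{n=1}^\infty$ is weakly null. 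Given $f\in\mathcal{L}_G(X,\mathbb{K})$, the identities $f(e_{g(j)})=f(g(e_j))=f(e_j)$ show that $n\mapsto f(e_n)$ is constant on each $G$-orbit; weak nullity forces $f(e_n)\to0$, so $f(e_n)=0$ whenever $n$ lies in an infinite orbit. Writing $\mathcal{F}_{G,\mathbb{N}}=\{H_1,\dots,H_m\}$ and letting $c_l$ be the constant value of $f$ on $H_l$, we obtain $f(e_n)=c_l$ for $n\in H_l$ and $f(e_n)=0$ otherwise; since each $H_l$ is finite, $f$ and $\sum_{l=1}^m c_l f_{H_l}$ agree on every basis vector, hence on all of $X$ by density of $\spann\{e_n:n\in\mathbb{N}\}$ and continuity. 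Therefore $\mathcal{L}_G(X,\mathbb{K})=\spann\{f_H:H\in\mathcal{F}_{G,\mathbb{N}}\}=\NA_G(X,\mathbb{K})$, of dimension $m<\infty$.

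The only place the argument does more than bookkeeping is this last inclusion, so that is where I expect the subtlety to lie: weak nullity must be used in an essential way, since without it $\mathcal{L}_G(X,\mathbb{K})$ can contain functionals supported on an infinite orbit (as happens for $X=\ell_1$ with $G$ the full permutation group, where $\ell_1$ lacks a weakly null basis), and one must keep track that it is the finiteness of the whole family $\mathcal{F}_{G,\mathbb{N}}$, not merely of each individual $H_l$, that makes $\sum_{l=1}^m c_lf_{H_l}$ a well-defined element of $X^*$.
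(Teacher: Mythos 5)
Your proposal is correct and follows essentially the same route as the paper: the same family of functionals $\sum_{j\in H}e_j^*$ indexed by the finite orbits, the same norm-one coordinate projection argument reducing norm-attainment to a supremum over a finite-dimensional ball, and the same weak-nullity argument showing every $G$-invariant functional vanishes on infinite orbits and hence lies in the span. If anything, your justification of the norm-one projection via $1$-unconditionality is slightly more careful than the paper's appeal to the basis constant, since the relevant index set need not be an initial segment.
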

	
	\begin{proof}
		Let us denote by $(e_n^*)_{n=1}^\infty$ the corresponding coefficient functionals. 
        
 We start by proving (i). For simplicity, let $\mathcal{F}_{G,\mathbb{N}} = \{H_1,\ldots,H_N\}$, and consider
		\[
		M :=  \left\{ \sum_{i \in H_1} a_1 e_i^* + \cdots + \sum_{i \in H_N} a_N e_i^*  : a_i \in \mathbb{K}, \, i=1,\ldots,N \right\}.
		\]
		Note that every element in $M$ is a $G$-invariant functional.
		Moreover, as $(e_n)_{n=1}^\infty$ is $1$-symmetric (hence its basis constant is $1$), we have 
   \begin{equation*}     
        B_{\text{span}\{e_i : i \in H_1 \cup \cdots \cup H_N \}} = Q(B_X), 
    \end{equation*}
        where $Q$ is the canonical projection from $X$ onto the finite-dimensional subspace $\text{span}\{e_i : i \in H_1 \, \cup \, \cdots \, \cup \, H_N \}$. This implies that every element in $M$ is norm-attaining; hence $M \subseteq \NA_G (X,\mathbb{K})$. Next, assume that $(e_n)_{n=1}^\infty$ converges weakly to $0$. Let $x^* \in \mathcal{L}_G(X,\mathbb{K})$ and $n_0 \not\in H_1 \cup \cdots \cup H_N$ be fixed. As the orbit $\{g(n_0):g\in G\}$ contains an arbitrarily large natural number, and $x^* (e_{n_0}) = x^* (e_{g(n_0)})$ for every $g \in G$, we deduce that $x^* (e_{n_0})=0$. It follows that the functional $x^*$ depends only on the coordinates in $H_1, \ldots, H_N$. Also, as $\{x^*(e_i) : i \in H_j\}$ is a singleton for each $j=1,\ldots, N$, the element $x^*$ belongs to $M$ and this proves (i). 
		
	In order to prove (ii), let $N \in \mathbb{N}$ be such that $\mathcal{F}_{G,\mathbb{N}}^N$ is infinite and let $\{ H_n : n \in \mathbb{N}\}$ be an enumeration of $\mathcal{F}_{G,\mathbb{N}}^N$. 
		Consider
		\[
		M = \left\{ \sum_{i \in H_1} a_1 e_i^* + \cdots + \sum_{i \in H_n} a_n e_i^*  : a_i \in \mathbb{K},  \, n \in \mathbb{N}\right\} \subseteq X^*.
		\]
From the argument in (i), every element in $M$ is $G$-invariant and also norm-attaining. 	\end{proof}
	

	
	\section{The Set of Non-Norm-Attaining Operators} \label{main}
	
The purpose of this section is to investigate the linear structure of the set of bounded linear operators that cannot be approximated by norm-attaining operators. In other words, whenever this set is nonempty, we study the algebraic structure of 
\begin{equation*} 
\mathcal{L}_G (X,Y) \setminus \overline{\NA_G (X,Y)}. 
\end{equation*}

As mentioned earlier, we consider the pairs of Banach spaces $(E, F) = (c_0, Y)$ and $(d_* (w,1), \ell_p)$ for which the closure of the set of non-norm-attaining operators from $E$ into $F$ is nonempty (we refer the reader to Subsection \ref{basic-notation} for the definition of $d_*(w,1)$ and to Subsection \ref{subsection:orbits} for the notation $\mathcal{F}_{G, \N}$ which will appear in the upcoming results). As a consequence of the results obtained in this section, we derive results \ref{M1}, \ref{M2}, and \ref{M3} stated in the Main Results.

Before entering into the details, we recall some results from \cite{DFJ} that illustrate how group actions may influence the density of norm-attaining operators.

	\begin{theorem} \cite[Proposition 4.1]{DFJ} \label{previous-paper-results1}  Let $G$ be a group of permutations of the natural numbers and let $Y$ be a strictly convex renorming of $c_0$. 
		\begin{itemize}
			\item[\textup{(i)}] If $\mathcal{F}_{G, \N}$ is finite, then 
			\begin{equation} \label{non-NA-0}
				\NA_G(c_0, Y) = \mathcal{L}_G(c_0, Y). 
			\end{equation}
			\item[\textup{(ii)}]   If $\mathcal{F}_{G, \N}$ is infinite and there is $N\in \mathbb N$ such that $|H|\leq N$ for every $H \in \mathcal{F}_{G, \N}$, then 
			\begin{equation} \label{non-NA-1}
				\overline{\NA_G(c_0, Y)}  \not= \mathcal{L}_G(c_0, Y).
			\end{equation}
		\end{itemize} 
	\end{theorem}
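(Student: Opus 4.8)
The plan is to reduce both parts to the structure of $G$-invariant operators on $c_0$, and, for part (ii), to the classical non-density of norm-attaining operators into a strictly convex renorming of $c_0$. Since $G$ acts on $c_0$ by permuting the canonical basis $(e_n)_{n=1}^\infty$, an operator $T\in\mathcal{L}(c_0,Y)$ is $G$-invariant precisely when $T(e_{g(n)})=T(e_n)$ for all $n\in\N$ and $g\in G$, i.e.\ when $T$ is constant on each orbit $H\in\Orb(G,\N)$; write $z_H:=T(e_i)$ for $i\in H$. If $H$ is \emph{infinite}, then evaluating $T$ on $e_{i_1}+\cdots+e_{i_k}\in S_{c_0}$ for distinct $i_1,\dots,i_k\in H$ gives $\|T\|\ge k\,\|z_H\|$ for every $k$, so $z_H=0$. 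Hence every $T\in\mathcal{L}_G(c_0,Y)$ vanishes on $e_n$ whenever $n$ lies in an infinite orbit, and $T$ is completely determined by the family $(z_H)_{H\in\mathcal{F}_{G,\N}}$.

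For part (i), if $\mathcal{F}_{G,\N}$ is finite then $F_0:=\bigcup\mathcal{F}_{G,\N}$ is a finite subset of $\N$, so by the above each $T\in\mathcal{L}_G(c_0,Y)$ factors as $T=T\circ Q$, where $Q\colon c_0\to E:=\operatorname{span}\{e_i:i\in F_0\}$ is the (norm-one) coordinate projection, and $Q(B_{c_0})=B_E$. As $\dim E<\infty$, $B_E$ is compact, so $\|T\|=\|T|_E\|$ is attained at some point of $B_E$; after normalizing we obtain $T\in\NA_G(c_0,Y)$. Thus $\mathcal{L}_G(c_0,Y)\subseteq\NA_G(c_0,Y)$, and the reverse inclusion is trivial, which proves \eqref{non-NA-0}.

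For part (ii), $I:=\mathcal{F}_{G,\N}$ is now countably infinite. I would introduce the averaging map $R\colon c_0\to c_0(I)$, $R(x)_H:=|H|^{-1}\sum_{i\in H}x_i$, check that it indeed lands in $c_0(I)$, has norm $\le 1$, and admits the norm-one right inverse $\sigma\colon c_0(I)\to c_0$ with $\sigma(w)$ equal to $w_H$ on $H$ and $0$ off $\bigcup I$ (so $R\circ\sigma=\operatorname{id}$). Then $\Psi\colon\mathcal{L}(c_0(I),Y)\to\mathcal{L}(c_0,Y)$, $\Psi(S):=S\circ R$, is injective with range exactly $\mathcal{L}_G(c_0,Y)$: given $T$ with data $(z_H)$ one takes $S:=T\circ\sigma$, so that $S(w)=\sum_{H}|H|\,w_Hz_H$ and $S\circ R=T$ by the structure paragraph, while conversely $S\circ R$ is $G$-invariant since $R\circ g=R$ for all $g\in G$; and $\Psi$ is isometric because $\|S\circ R\|\le\|S\|=\|(S\circ R)\circ\sigma\|\le\|S\circ R\|$. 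A short computation shows $\Psi$ maps $\NA(c_0(I),Y)$ onto $\NA_G(c_0,Y)$: if $S$ attains its norm at $w\in S_{c_0(I)}$ then $\|\Psi(S)(\sigma w)\|=\|Sw\|=\|S\|=\|\Psi(S)\|$ with $\sigma w\in S_{c_0}$; and if $\Psi(S)\ne0$ attains its norm at $x\in S_{c_0}$ then $0<\|S(Rx)\|=\|\Psi(S)\|=\|S\|$ forces $\|Rx\|=1$, so $S$ attains its norm. Hence $\Psi$ carries $\overline{\NA(c_0(I),Y)}$ onto $\overline{\NA_G(c_0,Y)}$, and since $c_0(I)\cong c_0$ isometrically, \eqref{non-NA-1} is equivalent to the assertion $\overline{\NA(c_0,Y)}\neq\mathcal{L}(c_0,Y)$.

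This last assertion is the classical fact, going back to Lindenstrauss \cite{L}, that norm-attaining operators are not dense in $\mathcal{L}(c_0,Y)$ when $Y$ is an infinite-dimensional strictly convex space (a suitable operator into $Y$ — e.g.\ the formal identity, or an appropriate diagonal operator — cannot be approximated by norm-attaining ones, using the strict convexity of $Y$). I therefore expect the main obstacle not to be a single hard estimate but the careful assembly of the isometric dictionary $\Psi$ together with the verification that it preserves norm-attainment in both directions; the hypothesis that the finite orbits are uniformly bounded ($|H|\le N$) is convenient for bookkeeping, while the essential input driving \eqref{non-NA-1} is that $\mathcal{F}_{G,\N}$ is infinite.
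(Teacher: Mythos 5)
Your argument is correct, and it reaches the conclusion by a genuinely different route from the one the paper relies on (the statement is imported from \cite{DFJ}, and the paper's own generalization, Theorem~\ref{thmA}, is proved by the same method as there). The paper's method is constructive: it exhibits an explicit $G$-invariant operator with $T(e_\gamma)=|H_\alpha|^{-1}e_{\varphi(\alpha)}$ and shows directly, via Lemma~\ref{martin} (norm-attaining operators into a strictly convex space depend on finitely many coordinates), that $T$ stays at distance at least $M^{-1}$ from every norm-attaining operator by testing against the unit vectors $\mathsf{1}_k=\sum_{\gamma\in H_{\alpha_k}}e_\gamma$. You instead build an isometric dictionary $\Psi(S)=S\circ R$ between $\mathcal{L}(c_0(I),Y)$ and $\mathcal{L}_G(c_0,Y)$ with $I=\mathcal{F}_{G,\N}$, verify that it matches $\NA(c_0(I),Y)$ with $\NA_G(c_0,Y)$ in both directions, and reduce to the classical Lindenstrauss theorem; both arguments ultimately rest on the same finite-support lemma, but yours packages it as a transfer principle. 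What your route buys is generality: the uniform bound $|H|\leq N$ genuinely plays no role in your argument, so you in fact prove (ii) assuming only that $\mathcal{F}_{G,\N}$ is infinite --- a hypothesis weaker than the one in the statement and even than the condition $|G|_\infty>0$ of Theorem~\ref{thmA}, which is worth flagging against Remark~\ref{rem-previous-paper-results2} and the first open problem of the paper. What it does not buy is the quantitative separation and the large uniformly bounded linearly independent family $\{T_F\}$ that the explicit construction provides and that the paper needs for the $2^{|G|_\infty}$-spaceability statements. Two small points to tighten: your parenthetical description of the classical input (``$Y$ an infinite-dimensional strictly convex space'') is too loose --- what is needed, and what holds here, is that $Y$ is a strictly convex renorming of $c_0$, so that the formal identity is bounded below and Lemma~\ref{martin} applies; and the identity $T=(T\circ\sigma)\circ R$ is most safely checked on the basis vectors $e_n$ (both sides being bounded operators) rather than by regrouping the series $\sum_i x_i T(e_i)$ over orbits, since the orbits need not be intervals of $\N$.
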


	\begin{theorem}[{\cite[Proposition 4.2]{DFJ}}] \label{thm:previous-paper-results2} 
    Let $G$ be a group of permutations of the natural numbers and $Y$ be a Banach space. 
		Suppose that $\mathcal{F}_{G, \N}$ is infinite and there is $N\in \mathbb N$ such that $|H|\leq N$ for every $H \in \mathcal{F}_{G, \N}$. If $w=(1/n)_{n=1}^{\infty} \in c_0$ and $1<p<\infty$, then 
			\begin{equation} \label{non-NA-2}
				\overline{\NA_G(d_{*}(w,1), \ell_p)}  \not= \mathcal{L}_G(d_{*}(w,1), \ell_p).
			\end{equation}
	\end{theorem} 
	
	In Theorem \ref{previous-paper-results1}, if $G$ is taken to be the trivial group $\{\text{Id}\}$, then $\mathcal{F}_{G,\mathbb{N}}$ is infinite and $|H| = 1$ for every $H \in \mathcal{F}_{G,\N}$. In this case, the theorem reduces to the classical result of Lindenstrauss \cite[Proposition 4]{L} on the existence of operators which cannot be approximated by norm-attaining operators.
 For the same reason, the result \eqref{non-NA-2} can be viewed as a group invariant extension of Gowers’ result \cite[Theorem, p.~149]{G}.

	\begin{remark}
The first, second, and third authors did not state that the orbits $H\in \mathcal F_{G,\N}$ need to satisfy a uniform boundedness condition in \cite[Proposition 4.1.(2)]{DFJ}. 
		However, the operator $T$ in that proof requires the cardinality of orbits in $\mathcal{F}_{G, \mathbb{N}}$ to be uniformly bounded so that $T$ is bounded.
		Here we fix this typo by stating the correct version of the result in Theorem~\ref{thm:previous-paper-results2}.
	\end{remark}

\subsection{From $c_0$ to its strictly convex renorming}	

We present our first main result for the case where the domain space is $c_0(\Gamma)$ along with their consequences (for comparison, see (\ref{non-NA-0}) and (\ref{non-NA-1}) in Theorem \ref{previous-paper-results1} above). For the notation $\mathcal{F}_{G, \Gamma}$ below, we refer the reader once again to Subsection \ref{subsection:orbits}.

	\begin{theoremA}\label{thmA}
		Let $\Gamma$ be an infinite set, $G$ a group of permutations of the elements of $\Gamma$ acting on $c_0(\Gamma)$, and $Y$ be a Banach space.
		\begin{enumerate}
		    [label={\textup{(\arabic*)}}]
			\setlength\itemsep{0.3em}
			\item \label{thmA-1} If $\mathcal{F}_{G, \Gamma}$ is finite, then every $G$-invariant operator is norm-attaining, that is, 
			\begin{equation*}     
				\mathcal{L}_G(c_0(\Gamma), Y) = \NA_G(c_0(\Gamma), Y).	
			\end{equation*} 
			
			\item \label{thmA-2} If $|G|_\infty>0$ and $Y$ is a strictly convex renorming of $c_0(\Gamma)$, then the set
			\begin{equation*} 
				\mathcal{L}_G(c_0(\Gamma), Y) \setminus \overline{\NA_G(c_0(\Gamma), Y)}
			\end{equation*} 
			is $2^{|G|_\infty}$-spaceable in $\mathcal{L}_G(c_0(\Gamma), Y)$. 
		\end{enumerate}
	\end{theoremA}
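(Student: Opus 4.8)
The plan is to read \ref{thmA-1} as the obvious extension of \cite[Proposition~4.1]{DFJ} (i.e.\ Theorem~\ref{previous-paper-results1}(i)) from $c_0$ to $c_0(\Gamma)$, and to obtain \ref{thmA-2} by planting $2^{|G|_\infty}$ mutually ``independent'' copies of the non-norm-attaining operator of Theorem~\ref{previous-paper-results1}(ii), organized by an independent family of subsets of $\mathcal{F}_{G,\Gamma}$ coming from the Fichtenholz--Kantorovich--Hausdorff theorem. For \ref{thmA-1}: a $G$-invariant $T\in\mathcal{L}(c_0(\Gamma),Y)$ is constant on the orbits of the canonical basis, say $Te_\gamma=y_H$ for $\gamma$ in the orbit $H$; if $H$ is infinite, picking distinct $\gamma_1,\gamma_2,\dots\in H$ and a non-increasing null sequence $(a_k)$ with $\sum_k a_k=\infty$ shows that $\sum_{k\le m}a_k e_{\gamma_k}$ converges in $c_0(\Gamma)$ while $T\big(\sum_{k\le m}a_ke_{\gamma_k}\big)=\big(\sum_{k\le m}a_k\big)y_H$ cannot converge unless $y_H=0$. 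Hence, when $\mathcal{F}_{G,\Gamma}=\{H_1,\dots,H_k\}$ is finite, $T$ is determined by $y_{H_1},\dots,y_{H_k}$, one has $Tx=\sum_j\big(\sum_{\gamma\in H_j}x_\gamma\big)y_{H_j}$, and $\|T\|=\max\{\|\sum_j z_jy_{H_j}\|:|z_j|\le|H_j|\}$ is attained over a compact set; a maximizer $(z_j^0)$ is realized at $x^0\in B_{c_0(\Gamma)}$ with $x^0_\gamma=z_j^0/|H_j|$ on $H_j$, so $T\in\NA_G(c_0(\Gamma),Y)$.

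For \ref{thmA-2} put $\kappa:=|G|_\infty$, so that $\kappa=|\mathcal{F}_{G,\Gamma}|\ge\aleph_0$ by the basic facts recorded after the definition of $|G|_\infty$. By Theorem~\ref{FKH} and Remark~\ref{FKH-sets-are-infinite}, fix an independent family $\{A_s:s\in S\}$ of subsets of $\mathcal{F}_{G,\Gamma}$ with $|S|=2^\kappa$ and every finite Boolean atom $A_{s_1}^{\varepsilon_1}\cap\dots\cap A_{s_n}^{\varepsilon_n}$ infinite; writing $\mathcal{F}_{G,\Gamma}$ as a disjoint union of sets $L_n$ $(n\ge 1)$, each of cardinality $\kappa$, and applying the theorem inside each $L_n$ and amalgamating, we may moreover assume that every such atom meets every $L_n$. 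For each finite orbit $H$ fix $\gamma_H\in H$; let $\psi_H\in S_{c_0(\Gamma)^*}$ be the $G$-invariant normalized averaging functional $x\mapsto|H|^{-1}\sum_{\gamma\in H}x_\gamma$ (the $G$-invariance uses $g(H)=H$); let $\lambda_H:=1-\tfrac1{n+1}$ for $H\in L_n$, so that $\sup_H\lambda_H=1$ is not attained, not even along any of the above atoms; and set, for $s\in S$,
\[
T_s(x):=\sum_{H\in A_s}\lambda_H\,\psi_H(x)\,f_{\gamma_H},
\]
where $(f_\gamma)$ is the canonical basis of $c_0(\Gamma)$ seen inside $Y$. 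Since every $x\in c_0(\Gamma)$ satisfies $|\psi_H(x)|<\eta$ for all but finitely many orbits $H$ (only finitely many orbits meet $\{\gamma:|x_\gamma|\ge\eta\}$), each $T_s$ is a bounded $G$-invariant operator with $\|T_s\|\le C$, where $C$ is the distortion of the renorming; using the \emph{normalized} average $\psi_H$ rather than the orbit-sum used in \cite{DFJ} is precisely what dispenses with a uniform bound on the orbit sizes. Put $V:=\overline{\operatorname{span}}\{T_s:s\in S\}\subseteq\mathcal{L}_G(c_0(\Gamma),Y)$. As the closed span of $2^\kappa$ vectors, $\dim V\le(2^\kappa)^{\aleph_0}=2^\kappa$; once the $T_s$ are seen to be linearly independent, $\dim V=2^\kappa=2^{|G|_\infty}$.

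It remains to prove that $V\setminus\{0\}\subseteq\mathcal{L}_G(c_0(\Gamma),Y)\setminus\overline{\NA_G(c_0(\Gamma),Y)}$, which also yields the linear independence since $0\in\overline{\NA_G}$. I would establish a \emph{uniform gap}: there is $\delta>0$ with $\operatorname{dist}\big(\sum_{i=1}^n c_iT_{s_i},\,\NA(c_0(\Gamma),Y)\big)\ge\delta\,\|\sum_i c_iT_{s_i}\|$ for every finite combination; letting such combinations converge to an arbitrary $T\in V$ then gives $\operatorname{dist}(T,\NA(c_0(\Gamma),Y))\ge\delta\|T\|$, hence $T\notin\overline{\NA_G(c_0(\Gamma),Y)}$ whenever $T\ne0$. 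For the gap, unwind $T:=\sum_i c_iT_{s_i}$: on a finite orbit $H$ one has $Te_\gamma=|H|^{-1}\lambda_H\,d(\varepsilon_H)\,f_{\gamma_H}$ with $\varepsilon_H:=(\mathbf 1[H\in A_{s_i}])_i\in\{0,1\}^n$ and $d(\varepsilon):=\sum_i c_i\varepsilon_i$, and $Te_\gamma=0$ on infinite orbits. Choosing $\varepsilon^*$ with $|d(\varepsilon^*)|=\max_\varepsilon|d(\varepsilon)|=:M>0$ (possible as $(c_i)\ne0$), the atom $E^*:=\bigcap_i A_{s_i}^{\varepsilon^*_i}$ is infinite and meets every $L_n$, and restricted to the coordinate subspace spanned by $\{e_\gamma:\gamma\in H,\ H\in E^*\}$ the operator $T$ equals $d(\varepsilon^*)\sum_{H\in E^*}\lambda_H\psi_H(\cdot)f_{\gamma_H}$, whose weight family $(\lambda_H)_{H\in E^*}$ still has non-attained supremum $1$; moreover $c'M\le\|T\|\le C'M$ for absolute constants $c',C'>0$, so $M\asymp\|T\|$. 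Thus, up to the scalar $d(\varepsilon^*)$, $T$ is — on the coordinate block indexed by $E^*$ — an operator of exactly the type built in \cite[Proposition~4.1]{DFJ} for the infinite, finite-orbit configuration $E^*$, and the (quantitative form of the) non-approximability proof there, which runs through the strict convexity of $Y$, supplies the required lower bound for $\operatorname{dist}(T,\NA(c_0(\Gamma),Y))$.

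The main obstacle is precisely this last step. Since \cite[Proposition~4.1]{DFJ} is stated qualitatively and only for $c_0$, one must (a) extract from Lindenstrauss's argument, as adapted there, a gap $\operatorname{dist}(\cdot,\NA)\gtrsim\|\cdot\|$ that is uniform over the family of such operators, and (b) make it insensitive both to restricting attention to the coordinate block $E^*$ and to adding the ``inactive'' part of $T$ supported on the other atoms — precompositions with coordinate projections do not preserve norm-attainment, so this has to be carried out by re-running the Lindenstrauss-type argument directly for the operator $\sum_H\lambda_H d(\varepsilon_H)\psi_H(\cdot)f_{\gamma_H}$, which is why the weights $(\lambda_H)$ and the independent family have been engineered so that the obstruction persists on every Boolean atom. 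The remaining ingredients — the set-theoretic bookkeeping with Theorem~\ref{FKH}, boundedness and $G$-invariance of the $T_s$, and the dimension count for $V$ — are routine.
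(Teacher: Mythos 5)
Your architecture for (II) — orbit-averaging operators $\psi_H(\cdot)f_{\gamma_H}$ indexed by the finite orbits, a Fichtenholz--Kantorovich--Hausdorff independent family producing $2^{|G|_\infty}$ of them, a uniform distance-to-$\NA$ estimate for finite combinations (with Rudin's lemma on Boolean atoms handling the complex case) that passes to the closed span, and the cardinality count $\dim V=2^{|G|_\infty}$ — is the paper's architecture, and your argument for (I) is correct (the paper omits it as analogous to \cite[Proposition 4.1.(1)]{DFJ}). But the step you yourself flag as ``the main obstacle'' is a genuine gap, and it is precisely the point where the paper has to supply a new lemma. You cannot obtain the lower bound for $\dist(T,\NA(c_0(\Gamma),Y))$ by invoking a ``quantitative form'' of \cite[Proposition~4.1]{DFJ}: that result is qualitative, is stated only for $c_0$, and — as you correctly note — restricting to the coordinate block $E^*$ does not interact with norm-attainment, so there is nothing there to quote. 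The missing ingredient is Lemma~\ref{martin}, a $c_0(\Gamma)$-version of Mart\'in's lemma proved by a short strict-convexity argument around a point where the norm is attained: every non-zero norm-attaining operator from a closed subspace of $c_0(\Gamma)$ into a strictly convex space vanishes on all but finitely many $e_\gamma$'s. With it the gap is immediate and requires no restriction to $E^*$: for a normalized $L=\sum_i a_iT_{s_i}$, Rudin's lemma and the atom $\bigcap_{i\in S}A_{s_i}\setminus\bigcup_{j\notin S}A_{s_j}$ produce infinitely many disjointly supported norm-one vectors $\mathsf{1}_k=\sum_{\gamma\in H_{\alpha_k}}e_\gamma$ with $\|L(\mathsf{1}_k)\|\geq (M\pi\|T\|)^{-1}$, while any $R\in\NA_G(c_0(\Gamma),Y)$ annihilates all but finitely many of the $\mathsf{1}_k$; hence $\|L-R\|\geq (M\pi\|T\|)^{-1}$, and the same estimate survives perturbation to an arbitrary element of the closed span.

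A second, related point: the weights $\lambda_H=1-\tfrac{1}{n+1}$ and the partition of $\mathcal{F}_{G,\Gamma}$ into blocks $L_n$ met by every atom rest on a misidentification of the mechanism of non-approximability. The obstruction is not that a supremum of diagonal weights fails to be attained (a Lindenstrauss-type diagonal phenomenon); it is strict convexity of $Y$ forcing norm-attaining operators to be finitely supported. The unweighted operators $T_s(x)=\sum_{H\in A_s}\psi_H(x)f_{\gamma_H}$ already lie, together with their whole closed span minus the origin, outside $\overline{\NA_G(c_0(\Gamma),Y)}$, so the $L_n$/$\lambda_H$ machinery can be deleted; it does no harm (Lemma~\ref{martin} applies equally to the weighted operators), but it does not by itself produce the distance estimate you need.
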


        Theorem~\ref{thmA} extends and strengthens \cite[Proposition~4.1]{DFJ} in the setting of  $c_0(\Gamma)$ with an infinite index set $\Gamma$. It is worth noting that every $c_0(\Gamma)$, for any set $\Gamma$, can be renormed to become strictly convex. In fact, there is an explicit equivalent norm on $c_0(\Gamma)$, known as Day’s norm, which is locally uniformly rotund (hence strictly convex) \cite[Theorem~7.3]{DGZ}.

    	Before presenting the proof of Theorem \ref{thmA} and its corollaries, we prove the following lemma, which is of independent interest. It generalizes \cite[Lemma~2]{M}, and the proof follows the same idea as in the original argument. We include it here for the sake of completeness.

    For each $\gamma \in \Gamma$, let us denote by $e_\gamma \in c_0 (\Gamma)$ the function on $\Gamma$ which has value $1$ at $\gamma$ and $0$ elsewhere. 
	
	\begin{lemma}\label{martin}
		Let $\Gamma$ be an infinite set.
		If $X$ is a closed subspace of $c_0(\Gamma)$ and $Y$ is a strictly convex Banach space, then every non-zero norm-attaining operator from $X$ to $Y$ depends only on finitely many elements $e_\gamma$'s.
	\end{lemma}
	
	\begin{proof}
		Let $T\in \NA(X,Y)$ and $x_0\in  B_X$ such that $\|T(x_0)\|=\|T\|=1$.
		Since $x_0\in c_0(\Gamma)$, the set $\Gamma_{1/2}:=\{\gamma\in \Gamma : |x_0(\gamma)|\geq 1/2 \}$ is finite.
		Consider
		$$
		Z:=\left\{x\in X : x(\gamma)=0,\ \forall \,\gamma\in \Gamma_{1/2} \right\}.
		$$
		Observe that every $z\in Z$ with $\|z\|\leq 1/2$ satisfies that $\|x_0\pm z\|\leq 1$.
		Thus, 
		\begin{equation}\label{equ:2}
			\|T(x_0)\pm T(z)\| = \|T(x_0\pm z)\| \leq \|T\| \|x_0\pm z\| \leq 1.
		\end{equation}
Being $Y$ strictly convex, $\|T(x_0)\|=1$ and \eqref{equ:2} yield $T(z)=0$.
		It follows that $T$ vanishes on $Z$. Consequently, $T(e_\gamma) = 0$ whenever $\gamma \not\in \Gamma_{1/2}$. 
	\end{proof}

Let us also recall the following well-known inequality for complex numbers, which will be used in the proofs of Theorems \ref{thmA} and \ref{thmB}.

\begin{lemma}[\mbox{\cite[Lemma 6.3]{R}}]  \label{complex-case} If $z_1, \ldots, z_n \in \C$, then there exists a subset $S \subseteq \{1, \ldots, n\}$ for which 
		\begin{equation*}
			\frac{1}{\pi} \sum_{k=1}^{n} |z_k| \leq \Big|  \sum_{j \in S} z_k \Big| \leq \sum_{k=1}^n |z_k|.
		\end{equation*}
	\end{lemma}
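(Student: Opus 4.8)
The plan is to reduce the whole statement to the lower bound, since the upper estimate $\big|\sum_{k\in S} z_k\big| \le \sum_{k=1}^n |z_k|$ is merely the triangle inequality and holds for every $S\subseteq\{1,\dots,n\}$. If all the $z_k$ vanish we simply take $S=\emptyset$, so from now on assume $\sum_k |z_k|>0$. The key idea is to fix a direction in the complex plane, keep exactly those $z_k$ whose projection onto that direction is nonnegative, and then average over the choice of direction.

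Concretely, write each $z_k$ in polar form $z_k = |z_k| e^{i\theta_k}$. For $\phi \in [0,2\pi)$ put
\begin{equation*}
S_\phi := \{\, k \in \{1,\dots,n\} : \cos(\theta_k - \phi) \ge 0 \,\}.
\end{equation*}
Taking the real part of $e^{-i\phi}$ times the corresponding partial sum gives
\begin{equation*}
\Big| \sum_{k \in S_\phi} z_k \Big| \;\ge\; \re\Big( e^{-i\phi} \sum_{k \in S_\phi} z_k \Big) \;=\; \sum_{k \in S_\phi} |z_k| \cos(\theta_k - \phi) \;=\; \sum_{k=1}^n |z_k|\, \big(\cos(\theta_k-\phi)\big)^{+},
\end{equation*}
where $t^{+} := \max\{t,0\}$; the last equality holds because the terms with $\cos(\theta_k-\phi)<0$ are exactly the ones omitted from $S_\phi$.

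Now I would average the right-hand side over $\phi\in[0,2\pi)$. Since the integral $\int_0^{2\pi}(\cos(\theta-\phi))^{+}\,d\phi$ is independent of $\theta$ and equals $\int_{-\pi/2}^{\pi/2}\cos t\,dt = 2$, one gets
\begin{equation*}
\frac{1}{2\pi} \int_0^{2\pi} \sum_{k=1}^n |z_k|\, \big(\cos(\theta_k - \phi)\big)^{+}\, d\phi \;=\; \frac{1}{\pi} \sum_{k=1}^n |z_k|.
\end{equation*}
Hence there exists $\phi_0$ with $\sum_{k=1}^n |z_k|\,(\cos(\theta_k-\phi_0))^{+} \ge \frac{1}{\pi}\sum_{k=1}^n |z_k|$, and then $S := S_{\phi_0}$ satisfies both required inequalities.

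I do not expect a genuine obstacle here: the only point to be careful about is that the construction of $S$ is purely existential, obtained from a mean-value argument (equivalently, from continuity of $\phi \mapsto \sum_k |z_k|(\cos(\theta_k-\phi))^{+}$, which must meet its average). The single nontrivial ingredient is the elementary evaluation of $\int_0^{2\pi}(\cos)^{+}$, giving the sharp constant $1/\pi$.
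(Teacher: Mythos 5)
Your proof is correct and is precisely the standard averaging argument of Rudin's Lemma 6.3, which the paper simply cites without reproducing a proof. The reduction to the lower bound, the choice $S_\phi=\{k:\cos(\theta_k-\phi)\ge 0\}$, and the evaluation $\int_0^{2\pi}(\cos)^{+}=2$ giving the constant $1/\pi$ all match the cited source's argument.
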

	
	Finally, we are ready to prove Theorem \ref{thmA}. For item (II), we apply Fichtenholz-Kantorovich-Hausdorff theorem which is stated in Theorem \ref{FKH} above.

\begin{proof}[Proof of Theorem \ref{thmA}] 
We prove first Theorem \hyperref[thmA-1]{A.(1)}, whose argument is inspired by the proof of \cite[Proposition~4.1.(1)]{DFJ}. Let $Y$ be a Banach space and $T \in \mathcal{L}_G(c_0(\Gamma), Y)$ be fixed. We show that $T$ is norm-attaining. Fix $\gamma \in \Gamma$ and assume that $\Orb(\gamma)$ is infinite. If $\gamma_1, \ldots, \gamma_m$ are distinct elements of $\Orb(\gamma)$ with $m\in\mathbb{N}$, then the element $x_m := \sum_{j=1}^m e_{\gamma_j}$ has norm one in $c_0(\Gamma)$. Since $T$ is $G$-invariant, we have that $\{ T(e_{\gamma'}): \gamma'  \in \Orb(\gamma) \} = \{ T(e_\gamma) \}$. This implies that 
\begin{equation*}
T(x_m) = \sum_{j=1}^m T(e_{\gamma_j}) = \sum_{j=1}^m T(e_{\gamma}) = m \cdot T(e_{\gamma}).
\end{equation*}
Since $m$ is arbitrary, $T(e_{\gamma}) = 0$ whenever $\Orb(\gamma)$ is infinite.

Now, write $\mathcal{F}_{G, \Gamma} = \{H_1, \ldots, H_N\}$, where each $H_k \subseteq \Gamma$ is finite. Consider $F:= \bigcup_{k=1}^N H_k$ and $P_F: c_0(\Gamma) \rightarrow c_0(\Gamma)$ to be the coordinate projection onto $F$. By the first part of the proof, we have that $T(e_{\gamma}) = 0$ for all $\gamma \not\in F$. It follows that 
\[
T(x) = T(P_F (x)) + T ( ( \id -P_F ) (x)) = T(P_F (x)) 
\]
for every $x \in c_0 (\Gamma)$. Thus, $T$ only depends on finitely many coordinates. Consequently, $T \in \NA_G(c_0(\Gamma), Y)$.

Let us now prove Theorem \hyperref[thmA-2]{A.(2)}. We distinguish two cases.

\noindent		
\textbf{Case 1.}  $|G|_\infty \neq |\mathcal F_{G,\Gamma}^N|$ for every $N \in \mathbb{N}$.

\noindent
\textbf{Case 2.}  There exists $N\in \N$ such that $|G|_\infty = |\mathcal F_{G,\Gamma}^N|$.

In both cases we will construct a family ${\mathcal F}$ of subsets of $\Gamma$ such that $|{\mathcal F}| = |G|_\infty$. 
		
For Case 1, by the definition of $|G|_\infty$ (see Subsection \ref{subsection:orbits}), there exists a strictly increasing sequence of natural numbers $(N_k)_{k=1}^\infty$ such that 
\begin{equation*} 
\aleph_0 \leq \left|\mathcal F_{G,\Gamma}^{N_k}\right| < \left|\mathcal F_{G,\Gamma}^{N_{k+1}}\right| \ \ \ \mbox{and} \ \ \ |G|_\infty = \sup \left\{ \left|\mathcal F_{G,\Gamma}^{N_k}\right| : k\in \N \right\}. 
\end{equation*} 
Observe that, for each $k\in \N$, 
\begin{equation*} 
\left|\mathcal F_{G,\Gamma}^{N_{k+1}} \setminus \mathcal F_{G,\Gamma}^{N_k}\right| = \left|\mathcal F_{G,\Gamma}^{N_{k+1}}\right|.
\end{equation*} 
Moreover, for all $i \neq j$ in $\N$, 
\begin{equation*} 
\left(\mathcal F_{G,\Gamma}^{N_{i+1}} \setminus \mathcal F_{G,\Gamma}^{N_i}\right) \cap \left( \mathcal F_{G,\Gamma}^{N_{j+1}} \setminus \mathcal F_{G,\Gamma}^{N_j}\right) = \emptyset.
\end{equation*} 
In this case, we set 
\begin{equation*} 
\mathcal{F} = \bigcup_{k\in \N} \left(\mathcal F_{G,\Gamma}^{N_{k+1}} \setminus \mathcal F_{G,\Gamma}^{N_k}\right).
\end{equation*}

		For Case 2, for such a natural number $N$, let us define $\mathcal{F} := \mathcal{F}_{G,\Gamma}^N$.  
		
        Thus, in either case, we obtain $|{\mathcal F}| = |G|_\infty$.

		To shorten our notation, we will denote by $\Omega$ an enumeration, without repetition, of $\mathcal{F}$, that is $\mathcal{F}=\{H_\alpha:\alpha\in \Omega\}$.
		Clearly $|H_\alpha|<\aleph_0$ for any $\alpha\in \Omega$ and $H_\alpha \cap H_\beta = \emptyset$ for every $\alpha, \beta \in\Omega$ with $\alpha \neq \beta$. 	Observe that $|\Omega| = |G|_\infty \leq |\Gamma|$, so we can fix a one-to-one mapping $\varphi$ from $\Omega$ to $\Gamma$.

Write $Y=(c_0(\Gamma), \| \cdot \|_Y)$ for a strictly convex renorming of $c_0 (\Gamma)$.
We define a linear operator $T: c_0(\Gamma) \rightarrow Y$ as follows. For each element $x=\sum_{\gamma \in \Gamma} x_\gamma e_\gamma \in c_0(\Gamma)$, we set
\begin{equation*} 
		T(x)=\sum_{\gamma\in \Gamma} x_\gamma T(e_\gamma),
\end{equation*} 
where 
\begin{equation*} 
		T(e_\gamma) :=\begin{cases}
			\dfrac{1}{|H_\alpha|} \,e_{\varphi(\alpha)}, &	\text{if} \ \gamma\in H_\alpha \text{ for some } \alpha \in \Omega, \\
            0, &\text {otherwise}.
		\end{cases}
\end{equation*} 
		Equivalently, for every $x=\sum_{\gamma \in \Gamma} x_\gamma e_\gamma \in c_0(\Gamma)$, 
\begin{equation*} 
		T(x) = \sum_{\alpha \in \Omega} \left( \frac{1}{|H_\alpha|} \sum_{\gamma \in H_\alpha} x_\gamma \right) e_{\varphi(\alpha)}.
\end{equation*}

		\noindent
		{\bf Claim 1}: $T$ is well-defined (i.e., $T(x) \in c_0(\Gamma)$ for each $x \in c_0 (\Gamma)$).

Fix $x = \sum_{\gamma \in \Gamma} x_\gamma e_\gamma \in c_0(\Gamma)$ and $\varepsilon > 0$.
It suffices to prove that the set $$S:= \{\gamma \in \Gamma : |T(x)(\gamma)|\geq \varepsilon \}$$ is finite. To this end, define $\Gamma_{x,\eps} := \left\{\gamma \in \Gamma : |x_\gamma|\geq \varepsilon \right\}$. By the construction of $T$, we may assume that $x_\gamma = 0$ for $\gamma \not\in \bigcup_{\alpha \in \Omega} H_\alpha$. Since $\Gamma_{x,\eps}$ is finite, it follows that there exist $\alpha_1, \ldots, \alpha_k \in \Omega$ (with $k \in \mathbb{N}$) such that $\Gamma_{x,\eps} \subseteq \bigcup_{i=1}^k H_{\alpha_i}$. 

Let $\xi \in S$ be arbitrary. If $\xi \neq \varphi(\alpha)$ for every $\alpha \in \Omega$, then $T(x)(\xi)=0$ which contradicts $\xi \in S$. Hence there exists $\beta \in \Omega$ such that $\xi = \varphi(\beta)$. Assume that $|x_\gamma| < \varepsilon$ for every $\gamma \in H_\beta$, then 
		\begin{equation*}
			|T(x)(\xi)| = \Big| \frac{1}{|H_{\beta}|} \sum_{\gamma \in H_{\beta}} x_\gamma \Big| \leq \frac{1}{|H_{\beta}|} \sum_{\gamma \in H_{\beta}} |x_\gamma| < \frac{1}{|H_{\beta}|} \sum_{\gamma \in H_{\beta}} \varepsilon = \frac{1}{|H_{\beta}|} |H_{\beta}| \varepsilon = \varepsilon,
		\end{equation*}
        which contradicts $\xi \in S$. Therefore, there exists $\gamma \in H_\beta$ such that $|x_\gamma| \geq \varepsilon$. In particular, there exists $i \in \{1,\ldots, k\}$ such that $\gamma \in H_{\alpha_i}$ and hence $\beta = \alpha_i$. Consequently, 
        $$\xi =\varphi(\beta) \in \{\varphi(\alpha_1),\ldots, \varphi(\alpha_k)\}.
        $$
Thus, $S \subseteq \{\varphi(\alpha_1),\ldots, \varphi(\alpha_k)\}$ which completes the proof of the claim. 
        

		\noindent
		{\bf Claim 2}: $T$ is a $G$-invariant bounded linear operator from $c_0 (\Gamma)$ to $Y$.

Let $M > 0$ be such that 
\begin{equation} \label{eq:equivalent_norm_Y}
M^{-1} \|\cdot\|_{c_0(\Gamma)} \leq \|\cdot\|_Y \leq M \|\cdot\|_{c_0(\Gamma)}.
\end{equation} 
Fix $x = \sum_{\gamma \in \Gamma} x_\gamma e_\gamma \in c_0(\Gamma)$.	Since $|H_\alpha|<\aleph_0$ for every $\alpha \in \Omega$, there exists $\gamma_\alpha \in H_\alpha$ such that  $|x_{\gamma_\alpha}| = \underset{\gamma \in H_\alpha}{\sup } |x_\gamma|$ for each $\alpha \in \Omega$. Hence,
		\begin{align*}
			&\|T(x)\|_Y  \\ 
   &\leq M \left\|  \sum_{\alpha \in \Omega} \left( \frac{1}{|H_\alpha|} \sum_{\gamma \in H_\alpha} x_\gamma \right) e_{\varphi(\alpha)} \right\|_{c_0(\Gamma)} =  M \sup_{\alpha \in \Omega} \left| \frac{1}{|H_\alpha|} \sum_{\gamma \in H_\alpha} x_\gamma \right| \leq  M \sup_{\alpha \in \Omega} |x_{\gamma_\alpha}|,
		\end{align*}
		which implies that 
		$\|T(x)\|_Y \leq  M \|x\|_{c_0(\Gamma)}$. On the other hand,  notice that $T$ is linear and $G$-invariant by construction.

          \noindent	{\bf Claim 3}: There exists a family $\mathcal{Y}$ of subsets of $\Omega$ such that $\{T_F : F \in \mathcal{Y}\}$ is a linearly independent, uniformly bounded family of $G$-invariant bounded linear operators.

We apply Fichtenholz-Kantorovich-Hausdorff theorem (see Theorem \ref{FKH}) to the set $\Omega$ to obtain a family $\mathcal{Y}$ of independent subsets of $\Omega$ such that $|\mathcal{Y}|= 2^{|\Omega|} = 2^{|G|_\infty}$. We claim that this family $\mathcal{Y}$ meets our purpose. For each member of this family $F \in \mathcal{Y}$, we define the operator $T_F: c_0(\Gamma) \rightarrow Y$ by 
\begin{equation*} 
		T_F(x)=\sum_{\gamma\in \Gamma} x_\gamma T_F (e_\gamma),
\end{equation*} 
		where 
\begin{equation*} 
		T_F(e_\gamma) :=\begin{cases}
			\dfrac{1}{|H_\alpha|} \, e_{\varphi(\alpha)}, &	\text{if} \ \gamma\in H_\alpha \text{ for some } \alpha \in F, \\		0, &\text {otherwise}.
		\end{cases}
\end{equation*} 
As before, $T_F$ can be written as
\begin{equation*} 
		T_F(x) = \sum_{\alpha \in F} \left( \frac{1}{|H_\alpha|} \sum_{\gamma \in H_\alpha} x_\gamma \right) e_{\varphi(\alpha)}
\end{equation*} 
for any $x=\sum_{\gamma \in \Gamma} x_\gamma e_\gamma \in c_0(\Gamma)$.	It is immediate that $T_F$ is well-defined and linear. Moreover, $T_F$ is $G$-invariant and satisfies $\|T_F\| \leq \|T\|$ for every $F \in \mathcal{Y}$. Thus, the family $\{T_F: F \in \mathcal{Y} \}$ is a uniformly bounded family of $G$-invariant bounded linear operators. 
	
  It remains to prove that the family $\{T_F: F \in \mathcal{Y} \}$ is linearly independent. For this, suppose that $\sum_{i=1}^m a_i T_{F_i} = 0$ for some $m \in \N$, $a_1, \ldots, a_m \in \K$ and distinct sets $F_1, \ldots, F_m \in \mathcal{Y}$. As the sets $F_1,\ldots, F_m$ are independent subsets, we can guarantee the existence of $\alpha \in \Omega$ such that 
		\begin{equation*}
			\alpha \in F_1 \setminus \left( \bigcup_{i=2}^m F_i \right).
		\end{equation*}
		For any $\gamma \in H_{\alpha}$, we then have 
		\begin{equation*}
			0 = \left( \sum_{i=1}^m a_i T_{F_i} \right) (e_{\gamma}) = a_1 T_{F_1} (e_\gamma) = \frac{a_1}{|H_\alpha|} e_{\varphi (\alpha)}.
		\end{equation*}
		This, in particular, implies that $a_1 = 0$. Analogously, we can show that $a_i = 0$ for every $i=2,3,\ldots, m$; therefore the family $\{T_F: F \in \mathcal{Y} \}$ is linearly independent.

        \noindent
		{\bf Claim 4}: $\overline{\Span} \{T_F : F \in \mathcal{Y} \}   \subseteq \left(\mathcal{L}_G(c_0(\Gamma), Y) \setminus \overline{\NA_G(c_0(\Gamma), Y)} \right) \cup \{0\}.$




		Let now $S \in \overline{\Span} \{T_F : F \in \mathcal{Y} \} $ with $S \not= 0$. Without loss of generality, assume that $\|S\| = 1$. Therefore, there exist $a_1, \ldots, a_m \in \K$ with $a_j \not= 0$ for $j=1,\ldots, m$ and distinct sets $F_1, \ldots, F_m \in \mathcal{Y}$ such that 
		\begin{equation*}
			L := \sum_{j=1}^m a_j T_{F_j} \ \ \ \ \mbox{and} \ \ \ \|S - L\| < \frac{1}{2 M \pi  \|T\|}
		\end{equation*}
        		where $M>0$ is the constant in \eqref{eq:equivalent_norm_Y}.

		Without loss of generality, we may further assume that $\|L\| = 1$. 
		Now observe that 
		\begin{equation*}
			1 = \|L\| \leq \sum_{j=1}^m |a_j| \|T_{F_j}\| \leq \sum_{j=1}^m |a_j| \|T\| = \|T\| \sum_{j=1}^m |a_j|.
		\end{equation*}
		On the other hand, by Lemma \ref{complex-case}, there exists a subset $\Lambda \subseteq \{1, \ldots, m\}$ such that 
		\begin{equation*} \label{ineq2} 
			\Big| \sum_{s \in \Lambda} a_s \Big| \geq \frac{1}{\pi} \sum_{j=1}^m |a_j| \geq \frac{1}{\pi \|T\|}.
		\end{equation*}
		
		Consider a countably infinite set $\{\alpha_k : k \in \N \} \subseteq \bigcap_{s \in \Lambda} F_s \setminus \Big( \bigcup_{j \not\in \Lambda} F_j \Big)$ (see Remark \ref{FKH-sets-are-infinite}) and the element 		
        \begin{equation*}
			\mathsf{1}_k := \sum_{\gamma\in H_{\alpha_k}} e_\gamma 
		\end{equation*}
        for each $k \in \mathbb{N}$, 		which is clearly an element of $c_0 (\Gamma)$ since $|H_\alpha|$ is finite for every $\alpha \in \Omega$. Observe that 
		\begin{equation*}
			L(\mathsf{1}_k) =  \sum_{j\in \Lambda} a_j T_{F_j} (\mathsf{1}_k) = \sum_{j\in \Lambda} a_j \Big( \sum_{\gamma \in H_{\alpha_k}} T_{F_j} (e_\gamma)\Big) = \Big( \sum_{j\in \Lambda} a_j \Big) e_{\varphi(\alpha_k)}.
		\end{equation*}
Consequently, 	
\begin{equation}\label{eq:S_estimate}
			\|S(\mathsf{1}_{k})\| \geq \Big|  \sum_{j\in \Lambda} a_j  \Big| \|e_{\varphi(\alpha_k)}\|_Y - \|S-L\| > \frac{1}{M \pi \|T\|} -  \frac{1}{2 M \pi  \|T\|}  = \frac{1}{M \pi \|T\|}
		\end{equation} 
        for every $k \in \mathbb{N}$.

                On the other hand, if $R \in \NA_G (c_0(\Gamma), Y)$, then by Lemma \ref{martin}, there is a subset $\Gamma_0 \subseteq \Gamma$ so that $|\Gamma_0|$ is finite and $R(e_\gamma)=0$ whenever $\gamma \not\in \Gamma_0$. Since $H_\alpha \cap H_\beta = \emptyset$ whenever $\alpha \neq \beta$, we can find $k_0 \in \mathbb{N}$ such that $H_{\alpha_{k_0}} \cap \Gamma_0 = \emptyset$. Thus, $R(\mathsf{1}_{k_0}) = 0$, while $\| S(\mathsf{1}_{k_0}) \| \geq 1/M\pi\|T\|$ by \eqref{eq:S_estimate}. This shows that 
        \begin{equation*}
        \dist (L, \NA_G (c_0 (\Gamma),Y)) \geq \frac{1}{M \pi \|T\|}.    
        \end{equation*}
		This completes the proof of Claim~4.

		\noindent
		{\bf Claim 5}: $\dim (\overline{\Span} \{T_F : F \in \mathcal{Y} \}) = 2^{|G|_\infty}$.

		Note that $\Span \{T_F : F \in \mathcal{Y} \}$ is a vector space over $\mathbb K$ of dimension $2^{|G|_\infty} \geq \mathfrak c = |\mathbb K|$. Thus, 
		$$
		|\Span \{T_F : F \in \mathcal{Y} \}| = \max \{ \dim (\Span \{T_F : F \in \mathcal{Y} \}),|\mathbb K| \} = 2^{|G|_\infty}.
		$$
Moreover, for each $R \in \overline{\Span} \{T_F : F \in \mathcal{Y} \}$, we can assign a sequence $(R_n)_{n=1}^\infty \in (\Span \{T_F : F \in \mathcal{Y} \})^{\mathbb N}$ such that $\|R_n - R\| \rightarrow 0$ as $n \rightarrow \infty$. This correspondence yields an injective map from $\overline{\Span} \{T_F : F \in \mathcal{Y} \}$ into $(\Span \{T_F : F \in \mathcal{Y} \})^{\mathbb N}$; hence 
\[
|\overline{\Span} \{T_F : F \in \mathcal{Y} \}| \leq |\Span \{T_F : F \in \mathcal{Y} \})^{\mathbb N}| = \left( 2^{|G|_\infty} \right)^{\aleph_0} = 2^{|G|_\infty}. 
\]
Consequently,
\begin{align*}
2^{|G|_\infty} \leq \dim (\overline{\Span} \{T_F : F \in \mathcal{Y} \}) 
			\leq |\overline{\Span} \{T_F : F \in \mathcal{Y} \}| \leq 2^{|G|_\infty}
		\end{align*}
as we wanted to prove.
	\end{proof}

As far as we are aware, the above spaceability result is new even in the classical case $G=\{\mathrm{Id}\}$.
The following corollary corresponds to this particular case and is precisely \ref{M1}.


	\begin{corollaryA} \label{corbigA1} Let $\Gamma$ be an infinite set and $Y$ be a strictly convex renorming of $c_0(\Gamma)$. Then, the set
		\begin{equation*} 
			\mathcal{L}(c_0(\Gamma), Y) \setminus \overline{\NA(c_0(\Gamma), Y)}
		\end{equation*} 
		is $2^{|\Gamma|}$-spaceable in $\mathcal{L}(c_0(\Gamma), Y)$. In particular, in the case when $\Gamma=\mathbb{N}$, the set 
		\begin{equation*} 
			\mathcal{L}(c_0, Y) \setminus \overline{\NA(c_0, Y)}
		\end{equation*} 
		is maximal-spaceable (specifically $\mathfrak c$-spaceable) in $\mathcal{L}(c_0, Y)$.
	\end{corollaryA}


    


The next corollary shows that, although Theorem \ref{thmA} provides large linear structures inside $\mathcal{L}_G(c_0(\Gamma),Y)\setminus\overline{\NA_G(c_0(\Gamma),Y)}$, 
such structures cannot be arbitrarily large when measured in terms of $(\alpha,\beta)$-spaceability (see Definition \ref{alpha-beta}).

	\begin{corollaryB} \label{corbigA2} Let $\Gamma$ be an infinite set, $G$ be a group of permutations of the elements of $\Gamma$ acting on $c_0(\Gamma)$ such that $|G|_\infty>0$, $Y$ be a strictly convex renorming of $c_0(\Gamma)$, and let $\alpha$ be a cardinal number with $\alpha \geq \aleph_0$.
		\begin{enumerate}[label={\textup{(\arabic*)}}]
  \itemsep0.3em
			\item If $\alpha \leq 2^{|G|_\infty}$, then the set 
			\begin{equation*} 
				\mathcal{L}_G(c_0(\Gamma), Y) \setminus \overline{\NA_G(c_0(\Gamma), Y)}
			\end{equation*} 
			is not $(\alpha,\beta)$-spaceable regardless of the cardinal number $\beta \geq \alpha$. \label{corbigA2-1} 
			
			\item If $\alpha \leq 2^{|\Gamma|}$, then the set
			\begin{equation*} 
				\mathcal{L}(c_0(\Gamma), Y) \setminus \overline{\NA(c_0(\Gamma), Y)}
			\end{equation*} 
			is not $(\alpha,\beta)$-spaceable regardless of the cardinal number $\beta \geq \alpha$. \label{corbigA2-2}
			  
			\item The set
			\begin{equation*} 
				\mathcal{L}(c_0, Y) \setminus \overline{\NA(c_0, Y)}
			\end{equation*} 
			is not $(\alpha,\beta)$-spaceable regardless of the cardinal number $\beta \geq \alpha$. \label{corbigA2-3}
		\end{enumerate}   
	\end{corollaryB}

	\begin{proof} 
    We only prove the item \hyperref[corbigA2-1]{A2.(1)} as the items \hyperref[corbigA2-2]{A2.(2)} and \hyperref[corbigA2-3]{A2.(3)} are its consequences. Let 
\begin{equation*} 		
A :=\mathcal{L}_G(c_0(\Gamma), Y) \setminus \overline{\NA_G(c_0(\Gamma), Y)} \ \ \ \mbox{and} \ \ \  B := \overline{\NA_G(c_0(\Gamma), Y)}.
\end{equation*} 
As $\alpha \leq 2^{|G|_\infty}$, it is clear that $A$ is $\alpha$-lineable by Theorem \hyperref[thmA-2]{A.(2)} and $B$ is $1$-lineable with $A\cap B = \emptyset$. 

If $R$ and $S$ are elements of $\NA_G (c_0(\Gamma),Y)$, then there exist finite subsets $\Gamma_R$ and $\Gamma_{S}$ of $\Gamma$ such that $R$ depends only on $\{e_\gamma : \gamma \in \Gamma_R\}$ and $S$ depends only on $\{e_\gamma : \gamma \in \Gamma_{S}\}$. It follows that $R+S$ depends only on the elements in $\{e_\gamma : \gamma \in \Gamma_R \cup \Gamma_{S}\}$; hence $R+S \in \NA_G (c_0 (\Gamma), Y)$. Consequently, $B+B\subseteq B$. This proves that $A$ is stronger than $B$. 	\end{proof}

   
 \subsection{From Gowers space to $\ell_p$}
    Next, we present a further main result concerning the case in which the domain space is $d_{*}(w,1)$ and the target space is an $\ell_p$-space. 
 
 \begin{theoremB}\label{thmB}
		Let $G$ be a group of permutations of $\mathbb N$  with $|G|_\infty>0$  acting on $d_{*}(w,1)$.
		Let $w=(1/n)_{n=1}^{\infty} \in c_0$ and $1<p<\infty$. Then, the subset
			\begin{equation*}
				\mathcal{L}_G(d_{*}(w,1), \ell_p) \setminus \overline{\NA_G(d_{*}(w,1),\ell_p)}
			\end{equation*} 
			is maximal-spaceable (specifically, $\mathfrak c$-spaceable) in $\mathcal{L}_G(d_{*}(w,1), \ell_p)$. \label{thmB-1}
	\end{theoremB}

    \begin{proof}
	Since $|G|_\infty > 0$, we can find $N\in \N$ such that $\mathcal{F}_{G, \N}^N$ is infinite.
		Observe that $|\mathcal{F}_{G, \N}^N|=\aleph_0$.
		Let $\{H_n : n\in \N\}$ be an enumeration of $\mathcal{F}_{G, \N}^N$.
		For each $n \in \N$, we consider the finite-dimensional subspace of $d_{*}(w, 1)$ given by $X_n := \Span \{e_k: k \in H_n \}$.
		We define on $X_n$ the bounded linear functional $f_n:X_n \rightarrow \K$ given by 
		\begin{equation*}
			f_n(x) = \sum_{j \in H_n} \frac{e_j^*}{|H_n|} \ \ \ (x \in X_n)
		\end{equation*}
		where $(e_k^*)_{k=1}^\infty$ are the coordinate functionals of $(e_k)_{k=1}^\infty \subseteq d_{*}(w,1)$. Then, each $f_n$ is $G$-invariant and satisfies the inequality given by 
		\begin{equation*} \label{d-star-ineq-1}
			f_n(e_j) = \frac{1}{|H_n|} \geq \frac{1}{N} \quad \text{for every } j \in H_n \text{ and } n \in \N.
		\end{equation*}
Let us define the linear operator $T: d_{*}(w,1) \rightarrow \ell_p$ given by
		\[
		T (e_j) :=\begin{cases}
			(0,\ldots,0,\underbrace{f_n (e_j)}_{n ^{\text{th}}{\text{-term}}},0, \ldots), &	\text{if} \ j\in H_n \text{ for some } n \in \N, \\
			0, &\text {otherwise}.
		\end{cases}
		\]

		\noindent
		{\bf Claim 1}: $T$ is a bounded operator and $T \not\in \overline{\NA_G(d_{*}(w,1), \ell_p)} $.

		
		Indeed, for every $x=(x_1, x_2, \ldots) \in d_{*}(w,1)$, we have that 
		\begin{equation*}
			T(x) = \sum_{j=1}^{\infty} x_j T(e_j) = \sum_{n=1}^{\infty} \Big( 0,\ldots,0, \underbrace{\sum_{j \in H_n} x_j f_n (e_j)}_{n ^{\text{th}}{\text{-term}}},0, \ldots \Big).
		\end{equation*}
		By the Jensen's inequality, we have that 
		\begin{align*}
			\|T(x)\|_p^p &= \sum_{n=1}^{\infty} \Big| \sum_{j \in H_n} x_j f_n(e_j) \Big|^p  = \sum_{n=1}^{\infty} \left| \frac{\sum_{j \in H_n} x_j}{|H_n|} \right|^p \leq \sum_{n=1}^{\infty} \frac{1}{|H_n|}  \sum_{j \in H_n} |x_j|^p. 
		\end{align*}
		Now, since $|H_n| \geq 1$ for every $n \in \N$, 
		\begin{equation*}
			\|T(x)\|_p^p \leq \sum_{n=1}^{\infty} \sum_{j \in H_n} |x_j|^p \leq \sum_{j=1}^{\infty} |x_j|^p.
		\end{equation*}
		This implies that $T$ is bounded. Next, observe that if $S \in \NA (d_* (w,1), \ell_p)$, then there is $N \in \N$ such that $S(e_k) = 0$ for every $k > N$ (see the proof of \cite[Theorem, p.149]{G}).
		As $\| T (e_j) \| \geq N^{-1}$ for every $j \in H_n$ and $n \in \mathbb{N}$, we conclude that $T$ cannot be approximated by norm-attaining operators.

		Applying Fichtenholz-Kantorovich-Hausdorff theorem (Theorem \ref{FKH}) to $\mathbb{N}$, there exists a family $\mathcal{N}$ of independent subsets of $\N$ of cardinality $\mathfrak{c}$. For each $A \in \mathcal{N}$, we define the operator $T_A: d_{*}(w,1) \rightarrow \ell_p$ by 
		\[
		T_A(e_j) :=\begin{cases}
			(0,\ldots,0,\underbrace{f_n (e_j)}_{n ^{\text{th}}{\text{-term}}},0, \ldots), &	\text{if} \ j\in H_n \text{ for some } n \in A, \\
			0, &\text {otherwise}.
		\end{cases}
		\]
		Notice that each $T_A$ is well-defined and linear. Moreover, $T_A$ is $G$-invariant and satisfies that $\|T_A\| \leq \|T\|$ for every $A \in \mathcal{N}$, that is, the family $\{T_A: A \in \mathcal{N}\}$ is uniformly bounded. Moreover, the same argument as in the proof of Claim 3 in Theorem \ref{thmA} yields that the family $\{T_A: A \in \mathcal{N} \}$ is linearly independent.

		\noindent
		{\bf Claim 2}: We have that 
		\begin{equation*}
			\overline{\text{span}} \{T_A: A\in\mathcal{N}\} \subseteq \left( \mathcal{L}_G(d_{*}(w,1), \ell_p) \setminus \overline{\NA_G(d_{*}(w,1), \ell_p)} \right) \cup \{0\}.
		\end{equation*}

		Let $S \in \overline{\text{span}}\{T_A: A\in\mathcal{N}\} $ with $\|S\|=1$. Therefore, there are $a_1, \ldots, a_m \in \K$ with $a_i\not= 0$ for every $i=1,\ldots, m$ and $A_1, \ldots, A_m \in \mathcal{N}$ with $A_i \neq A_j$ if $i \neq j$ such that 
		\begin{equation} \label{d-star-ineq4}
			L := \sum_{i=1}^m a_i T_{A_i} \ \ \ \mbox{and} \ \ \ \|S - L\| < \frac{1}{2 \pi N \|T\|}.
		\end{equation}
		We also may assume that $\|L\| = 1$. 
		Now observe that 
		\begin{equation} \label{d-star-ineq-2} 
			1 = \|L\| \leq \sum_{i=1}^m |a_i| \|T_{A_i}\| \leq \sum_{i=1}^m |a_i| \|T\| = \|T\| \sum_{i=1}^m |a_i|.
		\end{equation}
		By Lemma \ref{complex-case} and \eqref{d-star-ineq-2}, there exists $S \subseteq \{1, \ldots, m\}$ such that 
		\begin{equation} \label{d-star-ineq-3}
			\Big| \sum_{i \in S} a_i \Big| \geq \frac{1}{\pi} \sum_{i=1}^m |a_i| \geq \frac{1}{\pi \|T\|}.
		\end{equation}
		Again by Remark \ref{FKH-sets-are-infinite}, we can write 
		\begin{equation*}
			\bigcap_{i \in S} A_i \setminus \Big( \bigcup_{j \not\in S} A_j \Big) = \{ l_k: k \in \N \}.
		\end{equation*}
		For every $k \in \N$, we fix $r_k \in H_{l_k}$. 
		Let us observe that $r_k \not= r_l$ for every $k \not= l$ since we have that $H_{n_k} \cap H_{n_l} = \emptyset$ for every $k \not= l$. 
		This shows that $\{r_k: k \in \N\}$ is infinite. 
		Thus, for every $k \in \N$, we have
		\begin{equation*}
			L(e_{r_k}) = \sum_{i \in S} a_i f_{l_k} (e_{r_k}) e_{l_k} = \Big( \sum_{i \in S} a_i \Big) f_{l_k} (e_{r_k}) e_{l_k} \in \ell_p.
		\end{equation*}
		Therefore, by (\ref{d-star-ineq4}) and (\ref{d-star-ineq-3}), we get that 
		\begin{equation*}
			\|S(e_{r_k})\|_p \geq \Big| \sum_{i \in S} a_i \Big|  |f_{l_k}(e_{r_k})| - \|S - L\| > \frac{1}{\pi N \|T\|} - \frac{1}{2 \pi N \|T\|}=\frac{1}{2 \pi N \|T\|} > 0.
		\end{equation*}
		This shows that $S \not\in \overline{\NA_G(d_{*}(w,1), \ell_p)} $ which proves the claim.
		
		
		\noindent
		{\bf Claim 3}: $\mathcal{L}_G(d_{*}(w,1), \ell_p) \setminus \overline{\NA_G(d_{*}(w,1),\ell_p)} $ is maximal-spaceable ($\mathfrak{c}$-spaceable).

        The claim follows from 
        $$
		\mathfrak c \leq \dim (\overline{\text{span}} \{T_A: A\in\mathcal{N}\}) \leq \dim (\mathcal{L}_G(d_{*}(w,1), \ell_p)) \leq \mathfrak c
		$$
since $d_* (w,1)$ and $\ell_p$ both are separable. This finishes the proof.
\end{proof}

        When $G= \{ \text{Id}\}$, Theorem \ref{thmB} reduces to the following result, which corresponds to \ref{M2}.
	
	\begin{corollaryC} \label{corbigB2} Let $w=(1/n)_{n=1}^{\infty} \in c_0$ and $1<p<\infty$. 
			Then, the set
			\begin{equation*} 
				\mathcal{L}(d_{*}(w,1), \ell_p) \setminus \overline{\NA(d_{*}(w,1),\ell_p)}
			\end{equation*} 
			is maximal-spaceable in $\mathcal{L} (d_* (w,1), \ell_p)$.
	\end{corollaryC}

    In the same spirit of Corollary \ref{corbigA2}, we have the following negative result regarding $(\alpha,\beta)$-spaceability.
    
	\begin{corollaryD} \label{corbigB2} Let $G$ be a group of permutations of $\mathbb N$ with $|G|_\infty>0$ and let $\alpha$ be a cardinal number such that $\alpha \geq \aleph_0$. Let $w=(1/n)_{n=1}^{\infty} \in c_0$ and $1<p<\infty$, and suppose that $G$ acts on $d_*(w,1)$. 
			Then, the set
			\begin{equation*} 
				\mathcal{L}_G(d_{*}(w,1), \ell_p) \setminus \overline{\NA_G(d_{*}(w,1),\ell_p)}
			\end{equation*} 
			is not $(\alpha,\beta)$-spaceable regardless of the cardinal number $\beta \geq \alpha$. \label{corbigB2-1}
	\end{corollaryD}

		\begin{proof}
        Fix a cardinal number $\beta \geq \alpha$. If $\alpha > \mathfrak c$, then $\mathcal{L}_G(d_{*}(w,1), \ell_p) \setminus \overline{\NA_G(d_{*}(w,1),\ell_p)}$ is not $(\alpha,\beta)$-spaceable since $|\mathcal{L}_G(d_{*}(w,1), \ell_p)| = \mathfrak c < \alpha$. Thus, assume that $\aleph_0 \leq \alpha \leq \mathfrak c$. In order to apply Theorem \ref{pellegrino}, let us set
		\begin{equation*} 
			A := \mathcal{L}_G(d_{*}(w,1), \ell_p) \setminus \overline{\NA_G(d_{*}(w,1),\ell_p)} \ \ \  \mbox{and}  \ \ \ B := \overline{\NA_G(d_{*}(w,1),\ell_p)}.
		\end{equation*} 
		As the set $A$ is $\alpha$-lineable by Theorem \ref{thmB} and $A\cap B = \emptyset$, it remains to prove that $A$ is stronger than $B$.

		Arguing as in the proof of Corollary \hyperref[corbigA2]{A2.(I)}, the claim can be proved. For the sake of completeness, we include the proof. If $R$ and $S$ are elements of $\NA_G(d_{*}(w,1),\ell_p)$, then there exists $N \in \mathbb{N}$ such that $(R+S)(e_n) = 0$ for every $n > N$. Hence, $R+S$ depends only on finitely many coordinates; thus $R+S \in \NA_G(d_{*}(w,1),\ell_p)$. It follows that $B+B \subseteq B$. This proves that $A$ is stronger than $B$. \end{proof}
        

	\subsection{Operators with values in $\ell_p (\Gamma)$}
	
In this section, we focus on result \ref{M3} from the Main Results and present new examples along the same lines. As mentioned before, Pellegrino and Teixeira \cite{PT} proved that whenever $X$ and $Y$ are Banach spaces such that $Y$ contains an isometric copy of $\ell_p$ for some $1 \leq p < \infty$, and $x_0 \in S_X$ is fixed, the non-linear subset
	\begin{equation*}\label{eq:NAx}
		\NA^{x_0} (X,Y) := \{ T  \in \NA(X, Y): \|T(x_0)\| = \|T\|\}
	\end{equation*}
is lineable in $\mathcal{L} (X,Y)$ (see \cite[Proposition 6]{PT}). Moreover, they showed that if $\NA (X,Y) \neq \mathcal{L}(X,Y)$, then the set $\mathcal{L}(X,Y) \setminus \NA (X,Y)$ is lineable (see \cite[Proposition 7]{PT}) under the same assumptions.

In this section, our aim is to generalize the aforementioned results in two directions. First, we consider these results in the setting of group-invariant operators. Second, the index set $\mathbb{N}$ of the space $\ell_p$ is replaced by an arbitrary infinite set $\Gamma$.

Let us also mention that we have already dealt with operators whose range lies in $\ell_p$. Namely, Theorem \ref{thmB} concerns operators from $d_*(w,1)$ into $\ell_p$ that cannot \emph{even} be approximated by norm-attaining operators. That is,
\[
\Lin_G (d_* (w,1),\ell_p) \setminus \overline{\NA_G (d_* (w,1),\ell_p)} 
\]
is $\mathfrak c$-spaceable. Although this result has the advantage of dealing with operators outside the closure of $\NA_G (d_* (w,1),\ell_p)$, it applies to the specific domain space $d_* (w,1)$. By contrast, the main result of this section applies to an arbitrary Banach space $X$, and concerns the complement of $\NA_G (X, \ell_p (\Gamma))$.

Before presenting and proving our results, we note that, for a {\it compact} group $G \subseteq \mathcal{L}(X)$ and any Banach space $Y$, the set
\begin{equation*} 
		\NA_G^{x_0}(X,Y) := \{ T \in \NA_G (X,Y) : \|T(x_0) \| = \|T\| \}
	\end{equation*} 
is nonempty by the $G$-invariant Hahn–Banach theorem (see \cite[Proposition 1]{F}). 

 	
	\begin{theorem}\label{thm:lqgamma}
		Let $X$ be a Banach space, $G \subseteq \mathcal{L}(X)$ a group of isometries, $\Gamma$ an infinite set, and $1 \leq p < \infty$.  
		\begin{itemize}
  \itemsep0.3em
			\item[\textup{(i)}] If $G$ is compact and $x_0 \in X_G$ with $\|x_0\|=1$, then $\ell_p (\Gamma)$ is isometrically contained in $\NA_G^{x_0} (X, \ell_p (\Gamma))$. 
			\item[\textup{(ii)}] If $\NA_G (X, \ell_p (\Gamma)) \neq \Lin_G(X,\ell_p(\Gamma))$, then $\ell_p (\Gamma)$ is isometrically contained in $(\Lin_G(X,\ell_p(\Gamma)) \setminus \NA_G (X, \ell_p (\Gamma)))\cup \{0\}$.
		\end{itemize}  
	\end{theorem} 		
	
	\begin{proof}    (i): Let $T \in \NA_G^{x_0}(X, \ell_p(\Gamma))$ be a nonzero operator. 
        \begin{itemize}
            \item Suppose that $|\Gamma| > \aleph_0$ and take $\Gamma_0 \subseteq \Gamma$ nonempty with $|\Gamma_0| \leq \aleph_0$ such that 
\begin{equation}\label{normx0}
    \|T\| = \|T(x_0)\|_p = \left( \sum_{\lambda \in \Gamma_0} |(T(x_0))(\lambda)|^p \right)^{1/p}.
\end{equation}
If $|\Gamma_0| < \aleph_0$, then $\Gamma_0 = \{ \lambda_1, \ldots, \lambda_l\}$ for some $l \in \N$.
In this case, we may choose $(\lambda_j)_{j={l+1}}^\infty$ arbitrarily in $\Gamma \setminus \Gamma_0$, since \eqref{normx0} implies that $T(x_0)(\lambda_j)=0$ for every $j > l$. Hence, by extending $\Gamma_0$ to $\Gamma_0 \cup \{\lambda_j: j \geq l+1\}$ if necessary, we may assume that $\Gamma_0 \subseteq \Gamma$ satisfies $|\Gamma_0|=\aleph_0$ and that \eqref{normx0} still holds. 
Let us write $\Gamma_0 = \{ \lambda_j: j \in \N \}$. 
            \item If $|\Gamma|=\aleph_0$, then write $\Gamma = \{ \lambda_j : j \in \mathbb{N}\}$ and set $\Gamma_0 := \emptyset$.
        \end{itemize}

Thus, in both cases we have that
\begin{equation}\label{eq:gamma_0}
    \|T(x_0)\|_p = \left( \sum_{j=1}^{\infty} |(T(x_0))(\lambda_j)|^p \right)^{1/p}.
\end{equation}

Write $\Gamma \setminus \Gamma_0 = \bigcup_{i \in \Lambda} \Gamma_i$ in such a way that $|\Gamma_i| = \aleph_0$ for every $i \in \Lambda$ and $\Gamma_i \cap \Gamma_j = \emptyset$ for any pair of distinct $i,j\in \Lambda$. By standard cardinal arithmetic for infinite cardinals (under AC), we have $|\Gamma \setminus \Gamma_0| = |\Gamma| = |\Lambda|$. For $i \in \Lambda$, we write 
\begin{itemize} 
\item $\Gamma_i := \{ \lambda_j^{(i)} : j \in \mathbb{N}\}$, 
\item $\ell_p^{(i)} := \{ x \in \ell_p (\Gamma) : x(\lambda) = 0 \text{ if } \lambda \not\in \Gamma_i \}. $
\end{itemize} 
Define $T_i \in \Lin (X, \ell_p^{(i)})$ by 
\begin{equation*} 
(T_i (x)) (\lambda_j^{(i)} ) = (T(x))(\lambda_j) \quad (j\in\mathbb{N}).
\end{equation*}
Then $T_i$ is $G$-invariant. Let us denote by $I^{(i)}$ the canonical embedding of $\ell_p^{(i)}$ into $\ell_p (\Gamma)$ and consider 
\begin{equation*} 
V_i := I^{(i)} \circ T_i \in \Lin (X, \ell_p (\Gamma)),
\end{equation*}
which is still $G$-invariant. Notice that $V_i(x_0)(\lambda_j^{(i)}) = I^{(i)} \circ T_i (x_0) (\lambda_j^{(i)}) = T(x_0)(\lambda_j)$. Let $(a_i)_{i \in \Lambda} \in \ell_p(\Gamma)$ be given. The linear operator $\sum_{i \in \Lambda} a_i V_i$ defined from $X$ into $\ell_p(\Gamma)$ is well-defined. In fact, given $x \in X$ and $\lambda \in \Gamma$, we have that  
\begin{equation*} 
\left( \sum_{i \in \Lambda} a_i V_i \right) (x) (\lambda) =  \begin{cases}
a_i V_i(x)(\lambda), & \text{if} \ \lambda \in \Gamma_i \ \mbox{for some} \ i \in \Lambda, \\
0, & \text{otherwise}.
\end{cases}
\end{equation*} 
Then $\sum_{i \in \Lambda} a_i V_i$ is $G$-invariant.

We claim that 
$\sum_{i \in \Lambda} a_i V_i$ attains its norm at $x_0$ and 
\[
\left\|  \sum_{i \in \Lambda} a_i V_i \right\| = \|(a_i)_{i \in \Lambda}\|_{\ell_p(\Lambda)} \|T\|.
\]
Indeed, suppose that 
\begin{itemize}
    \item $F_1 \subseteq \Gamma_{j_1}, \ldots, F_m \subseteq \Gamma_{j_m}$ are finite subsets for some $j_1,\ldots, j_m$ of $\Lambda$.
    \item (Enlarging each set, if necessary) write $F_k = \{ \lambda_1^{(j_k)}, \ldots, \lambda_{l}^{(j_k)} \}$ for each $k=1,\ldots,m$, where $l$ is a positive integer. 
\end{itemize}
If we put $F = F_1 \cup \cdots \cup F_m$, then 
\begin{align*}
      \sum_{\lambda \in F}  \left| \sum_{i \in \Lambda} a_i V_i(x)(\lambda) \right|^p &= \sum_{k=1}^m \sum_{\lambda \in F_k} \left| \sum_{i \in \Lambda} a_i V_i(x)(\lambda) \right|^p \\ 
      &=\sum_{k=1}^m \sum_{i=1}^{l} |a_{j_k}|^p |T(x) (\lambda_i)|^p = \sum_{k=1}^m |a_{j_k}|^p \sum_{i=1}^{l} |T(x)(\lambda_i)|^p.
\end{align*}
This shows that $\| \sum_{i \in \Lambda} a_i V_i (x)\|_{\ell_p (\Gamma)} \leq  \|(a_i)_{i \in \Lambda}\|_{\ell_p(\Lambda)} \|T(x)\|$. On the other hand, notice that $j_1,\ldots, j_m$ are chosen arbitrarily from $\Lambda$ and $l$ is an arbitrary positive integer. It follows that 
\begin{align*} 
			\left\| \sum_{i \in \Lambda} a_i V_i (x_0) \right\|_{\ell_p(\Gamma)}^p  &=  \sup \left\{ \sum_{\gamma \in F} \left| \sum_{i \in \Lambda} a_i V_i(x_0)(\gamma) \right|^p: F \subseteq \Gamma \ \mbox{finite set} \right\}  \\ 
   & \geq \sup \left\{ \sum_{k=1}^m |a_{j_k}|^p \sum_{i=1}^{l} |T(x_0)(\lambda_i)|^p: j_1, \ldots, j_m \in \Lambda, \, l \in \mathbb{N} \right\} \\
   & = \|(a_i)_{i \in \Lambda}\|_{\ell_p(\Lambda)}^p \|T(x_0)\|^p
		\end{align*} 
where the last equality holds because of \eqref{eq:gamma_0}. 
This proves the claim, and shows that $\{V_i : i \in \Lambda\}$ generates $\ell_p (\Lambda)$ (isometrically, $\ell_p (\Gamma)$) inside $\NA_G^{x_0} (X, \ell_p (\Gamma))$.

(ii): Let $T \in \mathcal{L}(X, \ell_p(\Gamma)) \setminus \NA(X, \ell_p(\Gamma))$ be given. 
\begin{itemize}
    \item Suppose that $|\Gamma | > \aleph_0$. Take a sequence $(x_n) \subseteq B_X$ such that $\|T(x_n)\| \rightarrow \|T\|$ as $n \rightarrow \infty$. For each $n \in \N$, choose a subset $\Gamma_n \subseteq \Gamma$ with $|\Gamma_n| \leq \aleph_0$ such that 
\begin{equation*} 
    \|T(x_n)\| = \left( \sum_{\lambda \in \Gamma_n} |(Tx_n)(\lambda)|^p \right)^{1/p}.
\end{equation*}
Let $\Gamma_0 = \bigcup_{n \in \N} \Gamma_n$ and write $\Gamma_0 = \{\lambda_j : j \in \mathbb{N}\}$.
    \item If $|\Gamma|=\aleph_0$, then write $\Gamma = \{ \lambda_j : j \in \mathbb{N}\}$ and set $\Gamma_0:=\emptyset.$
\end{itemize}
In both cases, we obtain that 
\begin{equation*}
    \sup_{x \in B_X} \left( \sum_{j=1}^{\infty} |(Tx)(\lambda_j)|^p \right)^{1/p} = \|T\|. 
\end{equation*}
As in part (i), consider 
\begin{itemize}
    \item $\Gamma \setminus \Gamma_0 = \bigcup_{i \in \Lambda} \Gamma_i$ such that $|\Gamma_i| = \aleph_0$ for every $i \in \Lambda$ and $\Gamma_i \cap \Gamma_j = \emptyset$ for any pair of distinct $i,j\in \Lambda$. Moreover, $|\Gamma \setminus \Gamma_0| = |\Gamma| = |\Lambda|$.
    \item $\Gamma_i = \{ \lambda_j^{(i)}: j \in \N\}$ for each $i \in \Lambda$.
    \item $\{V_i: i \in \Lambda\} \subseteq \mathcal{L}(X, \ell_p (\Gamma))$.
\end{itemize}
  Given $(a_i)_{i \in \Lambda} \in \ell_p (\Lambda)$, the computations in part (i) yield that
\begin{equation*}
    \left\| \sum_{i \in \Lambda} a_i V_i \right\|^p = \|(a_i)_{i \in \Lambda} \|_{\ell_p(\Lambda)}^p \|T\|^p > \|(a_i)_{i \in \Lambda}\|_{\ell_p(\Lambda)}^p \|T(x)\|^p \geq \left\| \sum_{i \in \Lambda} a_i V_i(x) \right\|^p
\end{equation*}
for every $x \in B_X$, since $T$ does not attain its norm. 
Thus, $\ell_p(\Lambda)$ is isometrically embedded in $(\mathcal{L}_G(X, \ell_p(\Gamma)) \setminus \NA_G(X, \ell_p(\Gamma)))\cup\{0\}$.
	\end{proof}

	\subsection{Operators with values in $c_0 (\Gamma)$}

	In the following result, we provide a version of Theorem \ref{thm:lqgamma} for $c_0(\Gamma)$-spaces. To the best of our knowledge, this is not known even in the classical case (that is, when $G= \{ \text{Id}\}$). 
	
	\begin{theorem}\label{thm:c_0-target}
		Let $X$ be a Banach space, $G \subseteq \mathcal{L}(X)$ a group of isometries, and $\Gamma$ an infinite set.  
		\begin{itemize}
  \itemsep0.3em
			\item[\textup{(i)}] If $G$ is compact and $x_0 \in X_G$ with $\|x_0\|=1$, then $c_0 (\Gamma) $ is isometrically contained in $\NA_G^{x_0} (X, c_0 (\Gamma))$.
			\item[\textup{(ii)}] If $\NA_G (X, c_0 (\Gamma)) \neq \Lin_G(X,c_0 (\Gamma))$, then $c_0 (\Gamma)$ is isometrically contained in $(\Lin_G(X,c_0(\Gamma)) \setminus \NA_G (X, c_0 (\Gamma)))\cup\{0\}$. 
		\end{itemize}  
		
	\end{theorem} 
	
	\begin{proof}
	(i) Fix a non-zero operator $T \in \NA_G^{x_0} (X, c_0(\Gamma))$. Then the set given by 
		\begin{equation*} 
			\Gamma_0:=\{\lambda\in\Gamma: |(Tx_0)(\lambda)| > \|T\|/2\} 
		\end{equation*} 
		is finite, say $\Gamma_0 =\{\lambda_1,\ldots,\lambda_\ell\}$ for some $\ell \in \mathbb{N}$. We can write 
		\begin{equation*} 
			\Gamma\setminus\Gamma_0=\bigcup_{i\in\Lambda} \Gamma_i
		\end{equation*} 
		as a union of pairwise disjoint sets $\Gamma_i$ with $|\Gamma_i|=\aleph_0$ and $|\Lambda| = |\Gamma|$. 
		For each $i \in \Lambda$, we consider
		\[
		c_0^{(i)} := \{ x \in c_0 (\Gamma) : x(\lambda) = 0 \text{ if } \lambda \not\in \Gamma_i \}. 
		\]
		Write $\Gamma_i=\{ \lambda_j^{(i)} : j \in \mathbb{N}\}$. Define also $T_i \in \Lin (X, c_0^{(i)} )$ by 
		\begin{equation}\label{eq:Tic0}
			(T_i (x)) (\lambda_j^{(i)} ) = (T(x))(\lambda_j) \quad ( 1 \leq j \leq \ell )
		\end{equation}
		and $(T_i (x)) (\lambda_j^{(i)} ) = 0$ for $j > \ell$, and $T_i (x) (\lambda) =0$ if $\lambda \not\in \Gamma_i$. 
		
        For each $i \in \Lambda$, consider $V_i := I^{(i)} \circ T_i \in \Lin (X, c_0 (\Gamma))$, where $I^{(i)}$ is the canonical embedding of $c_0^{(i)}$ into $c_0 (\Gamma)$. Then $V_i$ is $G$-invariant, and $\|V_i\| = \|V_i (x_0)\| = \|T(x_0) \| = \|T\|$ for each $i\in\Lambda$ by \eqref{eq:Tic0}.

Let $(a_i)_{i\in \Lambda} \in c_0(\Lambda)$ and define $\sum_{i \in \Lambda} a_i V_i \in \mathcal{L}(X, c_0 (\Gamma))$ by 
\begin{equation}\label{eq:c0_sum_V_i's}
\left( \sum_{i \in \Lambda} a_i V_i \right) (x)(\lambda) = \begin{cases}
a_i V_i(x)(\lambda), & \text{if} \ \lambda \in \Gamma_i \ \mbox{for some} \ i \in \Lambda, \\
0, & \text{otherwise}.
\end{cases}
\end{equation}
Note that $\sum_{i \in \Lambda} a_i V_i$ is $G$-invariant. We claim that $\sum_{i \in \Lambda} a_i V_i$ attains its norm at $x_0$.
Indeed, 
\begin{equation*}
    \left|\left(\sum_{i \in \Lambda} a_i V_i \right) (x)(\lambda_j^{(k)} )\right| = |a_k (Tx)(\lambda_j)| = |a_k| |(Tx)(\lambda_j)|
\end{equation*}
for every $x\in B_X$, $j \in \mathbb{N}$ and $k \in \Lambda$;
hence 
\begin{equation*} 
\left \|\left(\sum_{i \in \Lambda} a_i V_i \right)  (x_0) \right \| = \|a\|_{c_0 (\Lambda)}  \|T \|.
\end{equation*} 
        It follows that $\{V_i : i \in \Lambda\}$ generates an isometric copy of $c_0 (\Lambda)$ in $\NA_G^{x_0} (X, c_0 (\Gamma))$.
		
		(ii): 
    Pick $T \in \Lin_G(X,c_0(\Gamma)) \setminus \NA_G (X, c_0 (\Gamma))$. As in the proof for part (ii) of Theorem \ref{thm:lqgamma}, 
    \begin{itemize}
       \item If $|\Gamma | > \aleph_0$, then take a sequence $(x_n) \subseteq B_X$ such that $\|T(x_n)\| \rightarrow \|T\|$ as $n \rightarrow \infty$. For each $n \in \N$, choose $\lambda_n \in \Gamma$ such that $\|T(x_n)\| = |T(x_n)(\lambda_n)|$. Write $\Gamma_0 =\{ \lambda_j : j\in \mathbb{N}\}$.
    \item If $|\Gamma|=\aleph_0$, then write $\Gamma = \{ \lambda_j : j \in \mathbb{N}\}$ and set $\Gamma_0:=\emptyset.$
        \item In either case, that is, whether $|\Gamma|>\aleph_0$ or $|\Gamma|=\aleph_0$, write $\Gamma \setminus \Gamma_0 = \bigcup_{i \in \Lambda} \Gamma_i$ such that $|\Gamma_i| = \aleph_0$ for every $i \in \Lambda$ and $\Gamma_i \cap \Gamma_j = \emptyset$ for any pair of distinct $i,j\in \Lambda$. Moreover, $|\Gamma \setminus \Gamma_0| = |\Gamma| = |\Lambda|$.
    \item Write $\Gamma_i = \{ \lambda_j^{(i)}: j \in \N\}$ for each $i \in \Lambda$.
\end{itemize}
Consider $c_0^{(i)} := \{ x \in c_0 (\Gamma) : x(\lambda) = 0 \text{ if } \lambda \not\in \Gamma_i \}$ and define $T_i \in \Lin (X, c_0^{(i)} )$ for each $i \in \Lambda$ by 
		\begin{equation}\label{eq:Tic02}
			(T_i (x)) (\lambda_j^{(i)} ) = (T(x))(\lambda_j) \quad (j \in \mathbb{N}).
		\end{equation}
		Put $V_i := I^{(i)} \circ T_i \in \Lin (X, c_0 (\Gamma))$, where $I^{(i)}$ is the canonical embedding of $c_0^{(i)}$ into $c_0 (\Gamma)$. Note that $\|V_i\| = \|T_i\|$ and $V_i$ is $G$-invariant. Moreover, by \eqref{eq:Tic02} and recalling that $T$ is non-norm-attaining, we get that 
		\[
		\|V_i (x)\| \leq \|T(x)\| < \|T\|=\|V_i\|. 
		\]
		Thus, $V_i$ does not attain its norm.

Given $(a_i)_{i\in \Lambda} \in c_0(\Lambda)$, consider $\sum_{i \in \Lambda} a_i V_i \in \mathcal{L}_G (X, c_0 (\Gamma))$ as in \eqref{eq:c0_sum_V_i's}. We claim that $\sum_{i \in \Lambda} a_i V_i \in \mathcal{L}_G (X, c_0 (\Gamma))$ does not attain its norm. Assume to the contrary that it attains its norm at some $x_0 \in B_X$. Then there exists $\lambda \in \Gamma$ such that 
\begin{equation*}
    \left| \left( \sum_{i \in \Lambda} a_i V_i \right) (x_0) (\lambda) \right| = \|a\|_{c_0(\Lambda)} \|T\|.
\end{equation*}
This implies that there exist $i \in \Lambda$ and $j \in \mathbb{N}$ such that $\lambda \in \Gamma_i$ with $\lambda = \lambda_j^{(i)}$ and $|a_{i}(Tx_0)(\lambda_j)| = \|a\|_{\infty} \|T\|$. It follows that $T$ attains its norm at $x_0$, which is a contradiction. Thus, $\{V_i : i \in \Lambda\}$ generates an isometric copy of $c_0 (\Lambda)$ in $(\mathcal{L}_G (X, c_0(\Gamma)) \setminus \NA_G (X, c_0(\Gamma)))\cup\{0\}$. 	
\end{proof} 

 We finish this section by presenting consequences of Theorems \ref{thm:lqgamma} and \ref{thm:c_0-target}.

 \begin{corollary}\label{cor:lqc0}
Let $X$ be a Banach space and $\Gamma$ an infinite set. Fix $x_0 \in S_X$ and $1 \leq p < \infty$. Then,
    \begin{itemize}
        \itemsep0.3em
        \item[\textup{(i)}] $\NA^{x_0}(X,\ell_p(\Gamma))$ is $ |\Gamma|^{\aleph_0}$-spaceable in $\mathcal{L}(X,\ell_p(\Gamma))$. 
        \item[\textup{(ii)}] $\NA^{x_0} (X, c_0 (\Gamma))$ is $ |\Gamma|^{\aleph_0}$-spaceable in $\Lin (X,c_0(\Gamma))$.
        \item[\textup{(iii)}] If $\Lin (X,\ell_p(\Gamma)) \setminus \NA (X, \ell_p (\Gamma))$ is nonempty, then $\Lin (X,\ell_p(\Gamma)) \setminus \NA (X, \ell_p (\Gamma))$ is $ |\Gamma|^{\aleph_0}$-spaceable in $\Lin (X,\ell_p(\Gamma))$.
        \item[\textup{(iv)}] If $\Lin (X,c_0(\Gamma)) \setminus \NA (X, c_0 (\Gamma))$ is nonempty, then $\Lin (X,c_0(\Gamma)) \setminus \NA (X, c_0 (\Gamma))$ is $ |\Gamma|^{\aleph_0}$-spaceable in $\Lin (X,c_0(\Gamma))$. 
    \end{itemize}
 \end{corollary}

 \noindent \textbf{Acknowledgements}. 
This work was partially carried out at the University of Valencia during a research stay of Daniel L. Rodríguez-Vidanes, who acknowledges the support provided by the Department of Mathematical Analysis of the Faculty of Mathematical Sciences at the University of Valencia. 
The authors would like to thank Helena Del R\'io for several fruitful conversations on the topic of this manuscript.

\vspace{0.2cm} 
 \noindent 
\textbf{Funding information}: Sheldon Dantas and Javier Falcó were both supported by grant PID2021-122126NB-C33 funded by MICIU/AEI/10.13039/501100011033 and by ERDF/EU. Sheldon Dantas was also supported by Grant PID2021-122126NB-C31 funded by MICIU/AEI/10.13039/501100011031 and by ERDF/EU. Mingu Jung was supported by a KIAS Individual Grant (MG086601), by June E Huh Center for Mathematical Challenges (HP086601) at Korea Institute for Advanced Study, and by the research fund of Hanyang University (HY-202500000003346). Daniel L. Rodríguez-Vidanes was supported by Grant PGC2018-097286-B-I00 and by the Spanish Ministry of Science, Innovation and Universities and the European Social Fund through a “Contrato Predoctoral para la Formación de Doctores, 2019” (PRE2019-089135).

\end{document}